\keywords{String Diagrams, Strictness, Coherence} 
\tikzstyle{edge}=[fill=white, draw=black, shape=circle]
\tikzstyle{node}=[fill=black, draw=black, shape=circle, inner sep=1.5pt]
\tikzstyle{morphism}=[fill=white, draw=black, shape=rectangle]
\tikzstyle{pointy}=[->]
\tikzstyle{dashy}=[-, dashed]
\tikzstyle{dashpoint}=[dashed, ->]
\tikzstyle{bluefill}=[-, fill={rgb,255: red,190; green,240; blue,255}]
\tikzstyle{greyfill}=[-, fill={rgb,255: red,240; green,240; blue,240}]
\newenvironment{theorem}{%
    \begin{thm}%
}{%
    \end{thm}%
    \ignorespacesafterend%
}
\newenvironment{proposition}{%
    \begin{prop}%
}{%
    \end{prop}%
    \ignorespacesafterend%
}
\newenvironment{definition}{%
    \begin{defi}%
}{%
    \end{defi}%
    \ignorespacesafterend%
}
\newenvironment{remark}{%
    \begin{rem}%
}{%
    \end{rem}%
    \ignorespacesafterend%
}
\newenvironment{example}{%
    \begin{exa}%
}{%
    \end{exa}%
    \ignorespacesafterend%
}
\newcommand{\topicsentence}[1]{#1}
\newcommand{\Nat}[0]{\ensuremath{\mathbb{N}}}
\newcommand{\Real}[0]{\ensuremath{\mathbb{R}}}
\newcommand{\Complex}[0]{\ensuremath{\mathbb{C}}}
\newcommand{\Bool}[0]{\ensuremath{\mathbb{B}}}
\newcommand{\defeq}[0]{\ensuremath{:=}}
\newcommand{\deftext}[1]{\textbf{#1}}
\newcommand{\functor}[1]{\ensuremath{\mathsf{#1}}}
\newcommand{\cp}[0]{\ensuremath{\fatsemi}} 
\newcommand{\id}[0]{\ensuremath{\mathsf{id}}}
\newcommand{\twist}[0]{\ensuremath{\sigma}}
\newcommand{\cat}[1]{\ensuremath{\mathscr{#1}}}
\newcommand{\Functor}[1]{\ensuremath{\mathsf{#1}}}
\newcommand{\unit}[0]{I}
\newcommand{\assoc}[0]{\ensuremath{\alpha}}
\newcommand{\assocInv}[0]{\ensuremath{\alpha}^{-1}}
\newcommand{\unitl}[0]{\ensuremath{\lambda}}
\newcommand{\unitlInv}[0]{\ensuremath{\lambda}^{-1}}
\newcommand{\unitr}[0]{\ensuremath{\rho}}
\newcommand{\unitrInv}[0]{\ensuremath{\rho}^{-1}}
\newcommand{\bigadapter}[0]{\ensuremath{\Phi}}
\newcommand{\bigadapterInv}[0]{\ensuremath{\Phi^{*}}}
\newcommand{\adapter}[0]{\ensuremath{\phi}}
\newcommand{\adapterInv}[0]{\ensuremath{\phi^{*}}}
\newcommand{\CatC}[0]{\ensuremath{\cat{C}}}
\newcommand{\TensorC}[0]{\ensuremath{\otimes}}
\newcommand{\unitC}[0]{\ensuremath{\unit_{\CatC}}}
\newcommand{\generator}[1]{\scalebox{0.4}{\tikzfig{#1}}}
\newcommand{\stikzfig}[1]{\scalebox{0.7}{\tikzfig{#1}}}
\newcommand{\CatW}[0]{\ensuremath{\cat{W}}}
\newcommand{\unitW}[0]{\ensuremath{\unit_{\CatW}}}
\newcommand{\TensorW}[0]{\ensuremath{\otimes}}
\newcommand{\CatSW}[0]{\ensuremath{\overline{\CatW}}}
\newcommand{\unitSW}[0]{\ensuremath{\unit_{\CatSW}}}
\newcommand{\CatD}[0]{\ensuremath{\overline{\CatC}}}
\newcommand{\TensorD}[0]{\ensuremath{\bullet}}
\newcommand{\unitD}[0]{\ensuremath{\unit_{\CatD}}}
\newcommand{\assocD}[0]{\ensuremath{\assoc_{\CatD}}}
\newcommand{\ObjD}[1]{{\ensuremath{\overline{#1}}}}
\newcommand{\MorD}[1]{{\ensuremath{\overline{#1}}}}
\newcommand{\FunF}[0]{\ensuremath{\mathcal{S}}}
\newcommand{\FunG}[0]{\ensuremath{\mathcal{N}}}
\newcommand{\FunU}[0]{\ensuremath{\Functor{U}}}
\newcommand{\FunS}[0]{\FunF} 
\newcommand{\FunN}[0]{\FunG} 
\newcommand{\objsize}[0]{\ensuremath{\mathtt{size}}}
\newcommand{\objpack}[0]{\ensuremath{\mathtt{pack}}}
\newcommand{\objunpack}[0]{\ensuremath{\mathtt{unpack}}}
\newcommand{\canonical}[0]{\ensuremath{\mathsf{can}}}
\newcommand{\dom}[0]{\ensuremath{\mathsf{dom}}}
\newcommand{\cod}[0]{\ensuremath{\mathsf{cod}}}
\newcommand{\CatT}[0]{\ensuremath{\mathbf{1}}}
\newcommand{\CatM}[0]{\ensuremath{\cat{M}}}
\newcommand{\TensorM}[0]{\ensuremath{\otimes}}
\newcommand{\unitM}[0]{\ensuremath{\unit_{\CatM}}}
\newcommand{\ItM}[0]{\ensuremath{\mathsf{It}(\cat{M})}}
\newcommand{\unitItM}[0]{\ensuremath{\Functor{Const}_{\unitM}}}
\newcommand{\TensorItM}[0]{\ensuremath{\Box}}
\theoremstyle{thmC}
\newtheorem{corollaryC}[thm]{Corollary}
\begin{document}

\title{String Diagrams for Strictification and Coherence}
\author{Paul Wilson\lmcsorcid{https://orcid.org/0000-0003-3575-135X}}[a]
\author{Dan {Ghica}}[b]
\author{Fabio {Zanasi}\lmcsorcid{https://orcid.org/0000-0001-6457-1345}}[c, d]

\address{Independent, United Kingdom}
\email{paul@statusfailed.com}

\address{University of Birmingham, United Kingdom}
\email{d.r.ghica@cs.bham.ac.uk}

\address{University College London, United Kingdom}
\email{f.zanasi@ucl.ac.uk}

\address{University of Bologna, OLAS team (Inria), Italy}

\begin{abstract}
  \label{section:abstract}
  Whereas string diagrams for strict monoidal categories are well understood, and have found application in several fields of Computer Science,
 graphical formalisms for non-strict monoidal categories are far less studied.
In this paper, we provide a presentation by generators and relations of string
diagrams for non-strict monoidal categories,
and show how this construction can handle applications in domains such as
digital circuits and programming languages.
We prove the correctness of our construction, which yields a novel proof of Mac
Lane's strictness theorem.
This in turn leads to an elementary graphical proof of Mac Lane's
\emph{coherence} theorem, and in particular allows for the inductive
construction of the canonical isomorphisms in a monoidal category.

\end{abstract}

\maketitle

\section{Introduction}
\label{section:introduction}
String diagrams are a rigorous graphical notation for morphisms in a category, which is proving useful in a broad variety of application domains, such as quantum systems~\cite{PQP}, computational linguistics~\cite{DBLP:journals/apal/CoeckeGS13}, digital circuits~\cite{DBLP:conf/csl/GhicaJL17}, or signal flow analysis~\cite{DBLP:series/ifip/Bonchi0Z21} --- see e.g.~\cite{PiedeleuZanasi23} for a recent overview.
What the majority of string diagrammatic notations have in common is that they are devised for monoidal categories in which the tensor is \emph{strict}, i.e. the associator and unitor morphisms are identities.
As Joyal and Street explain in their seminal \emph{Geometry of Tensor Calculus}~\cite{JOYAL199155}, the choice of using a strict monoidal category was motivated by convenience (``simplicity of exposition'') and by a wish to focus on ``aspects other than the associativity of tensor product''. 
Furthermore, they believed that ``most results obtained with the hypothesis that a tensor category is strict can be reformulated and proved without this condition.''

Indeed, in terms of mathematical power, this statement is true. 
However, string diagrams have been used increasingly as a convenient \emph{syntax} for languages with models in (strict) monoidal categories. 
And, when used as syntax, the distinction between strict and non-strict tensor becomes relevant, if not in terms of mathematical expressiveness then at least as a mechanism of abstraction. 
This is why modern programming languages, and even some modern hardware design languages such SystemVerilog~\cite{sutherland2006systemverilog}, use non-strict features such as \emph{tuples} and \emph{structs} which can nest in non-trivial ways.
These non-strict structures could be manually `strictified' by the programmer by flattening them into arrays.
Using such programmer conventions instead of native syntactic support does not entail a loss of expressiveness, but a loss of code readability, convenience, and general programmer effectiveness.

In this paper, we address the problem of expanding the graphical language of string diagrams with the required features that allow the expression of non-strict tensors. 
What makes the language of strict tensors convenient for the graphical representation is that objects are naturally represented as \emph{lists of wires}. 
This suggests that string diagrams make use of strictness in an essential way and, indeed, naive attempts to define string diagram languages for non-strict monoidal categories can render the notation so heavy-going as to lose the intuitiveness that makes it so attractive in the first place. 
A more sophisticated solution, which we propose here, is to deliberately use the strictification of a possibly non-strict monoidal category in order to make string diagrams function in this setting with a minimum of additional overhead. 
These points will be illustrated with examples in Section~\ref{section:graphical-language}.

Concretely, the basic idea is to use new operations to `pack' pairs of wires into single wires with internal tensorial structure and to `unpack' structured wires into pairs of wires labelled with the tensor component objects. 
The repeated application of unpacking can flatten any wire with an arbitrarily complex tensor structure into a list of wires labelled with elementary objects. 
Other new operations are used to `hide' or `reveal' wires labelled with the tensor unit. 
These four families of new operations are used to define the associators and the unitors of the strictified category. 

\subsection{Contributions}
We propose a strictification construction yielding a graphical language for
non-strict monoidal categories. With respect to traditional string diagrams, it
provides a more fine-grained representation of tensoring, whose usefulness we
demonstrate in motivating examples drawn from circuit theory and programming
language semantics.
The bulk of the paper is then dedicated to showing that the construction is correct, i.e. the strictified category in which string diagrams live is monoidally equivalent to the original non-strict category. 
Our proof of monoidal equivalence is new: in contrast to Mac Lane's we do not
rely on the coherence theorem, and instead construct the functors of the equivalence
explicitly.
Consequently, we are able to give a new elementary proof of the coherence
theorem: we show \emph{graphically} that the free monoidal category on a single
generator forms a preorder.
The remainder of the coherence result is largely a reformulation Mac Lane's
original corollary, but in a way that we believe has pedagogical value.

\subsection{Related Work}
The use of adapter morphisms which can `pack' and `unpack' wires has been
explored in various forms.
Our adapters can be recovered as instances of more sophisticated constructs used
in the study of coherence of weakly distributive categories
\cite{coherence_weakly_distributive}.
These categories use two distinct tensors and an additional kind of wire
(`thinning links') in their string diagram language.
Note that weakly distributive categories are precisely monoidal categories when the two monoidal structures coincide. This observation is not explored in~\cite{coherence_weakly_distributive} but it is discussed in the context of proof nets in the follow up work~\cite{cockett2017proof}, which also mentions that `thinning links' may be removed when considering the case of monoidal categories. Still, it is
not obvious how this observation would lead to coherence results for
``mere'' monoidal categories as a corresponding simplification of the coherence of
weakly distributive categories.
A non-diagrammatic approach giving a type theory for symmetric monoidal
categories can also be found in~\cite{shulman}.
The idea also appears in the study of quantum~\cite{Carette_2021} and
reversible~\cite{reversible_circuits} circuits,
although these do not study the connection to the strictness and coherence
theorems.
The distinctive feature of our paper is to show explicitly how introducing
adapters allows non-strict categories to take advantage of graph based
datastructures for \emph{strict} monoidal categories such as those of
\cite{wilson2021cost}.
More concretely, by \emph{explicitly} defining the functors mapping between a
non-strict category and its `strictified' counterpart, we obtain
datastructure-independent algorithms for translating between strict and
non-strict settings.
In addition, we formulate our approach using presentations by generators and
relations, which brings the strictness question much closer to current graphical
calculi approaches to circuits such as
\cite{lafont_circuits,polynomial_circuits,full_abstraction,graphical_affine_algebra,passive_linear_networks}.
Finally, our approach allows us to give a self-contained proof of Mac Lane's
strictness theorem that does not rely on the coherence theorem, allowing us to
avoid its associated pitfalls.

Note this paper extends the work published at CSL'23~\cite{DBLP:conf/csl/0002GZ23} with the material presented in Section~\ref{section:symmetric-monoidal-strictness} and Section~\ref{section:corollary-proof}. 

\subsection{Synopsis}
In Section~\ref{section:graphical-language} we present our graphical calculus
for (non-strict) monoidal categories, in the form of a strictification
procedure.
Subsections~\ref{section:motivating-examplesI}, \ref{section:exstlc}, and
\ref{section:exnonstrict} illustrate a series of motivating examples.
Section~\ref{section:strictness} justifies our construction by proving that it
yields an equivalence of categories.
Section~\ref{section:symmetric-monoidal-strictness} shows how the strictness
result extends to the symmetric monoidal case.
Section~\ref{section:coherence} revisits MacLane's Coherence theorem and some of
its consequences in light of the approach we presented.
Finally, in Section~\ref{section:corollary-proof}, we give a graphical
exposition of Mac Lane's \emph{corollary} to the coherence theorem.
Section~\ref{section:conclusions} is dedicated to conclusions and future work.

\section{A graphical language for (non-strict) monoidal categories}
\label{section:graphical-language}

We assume familiarity with string diagrams for strict monoidal categories, see e.g.~\cite{selinger2010survey}. Let us fix an arbitrary (non-strict) monoidal category $\CatC$. We construct its \emph{strictification} as the strict monoidal category $\CatD$ defined as follows.
\begin{definition}
  \label{definition:catd}
  $(\CatD, \TensorD)$ is the strict monoidal category freely generated by:
  \begin{enumerate}
    \item Objects $\ObjD{A}$ for each $A \in \CatC$
    \item Generators \eqref{equation:catd-generators}, with $\MorD{f} : \ObjD{A} \to \ObjD{B}$ for each $f : A \to B \in \CatC$
    \item functoriality equations \eqref{equation:catd-functor-equations}
    \item adapter equations \eqref{equation:catd-adapter-equations}, and
    \item associator/unitor equations \eqref{equation:catd-monoidal-equations}
  \end{enumerate}
\end{definition}

Note that because $\CatD$ is strict by definition, we are entitled to use string
diagrammatic notation.
Thus, a morphism with $m$ inputs and $n$ outputs will have domain and codomain
of the form
$\ObjD{A_1} \TensorD \cdots \TensorD \ObjD{A_m}$
and $\ObjD{B_1} \TensorD \cdots \TensorD \ObjD{B_n}$,
respectively.

\vbox{
\begin{multicols}{2}
  \begin{equation}
    \label{equation:catd-generators}
    \begin{gathered}
      \stikzfig{generator-big-adapter} \qquad \stikzfig{generator-big-adapterInv} \\
      \stikzfig{generator-adapter} \qquad \qquad \stikzfig{generator-adapterInv} \\
      \stikzfig{generator-strictf}
    \end{gathered}
  \end{equation}
  \dotfill
  \begin{equation}
    \label{equation:catd-functor-equations}
    \begin{gathered}
      \begin{aligned}
        \stikzfig{equation-identity-lhs}    \quad & = \quad \stikzfig{equation-identity-rhs} \\
        \stikzfig{equation-composition-lhs} \quad & = \quad \stikzfig{equation-composition-rhs}
      \end{aligned}
    \end{gathered}
  \end{equation}
  \break
  \begin{gather}
    \label{equation:catd-adapter-equations}
      \begin{aligned}
        \stikzfig{equation-Adapter-naturality-1-lhs} \quad & = \quad \stikzfig{equation-Adapter-naturality-1-rhs}
        \\
        \stikzfig{equation-Adapter-naturality-2-lhs} \quad & = \quad \stikzfig{equation-Adapter-naturality-2-rhs}
        \\
        \stikzfig{equation-adapter-isomorphism-1-lhs} \quad & = \quad \stikzfig{empty}
        \\
        \stikzfig{equation-adapter-isomorphism-2-lhs} \quad & = \quad \stikzfig{equation-adapter-isomorphism-2-rhs}
      \end{aligned}
  \end{gather}
\end{multicols}
}
\begin{gather}
  \label{equation:catd-monoidal-equations}
    \begin{aligned}
      \MorD{\assoc}    \quad & = \quad \stikzfig{equation-associator-rhs} \qquad
      & \MorD{\assocInv} \quad & = \quad \stikzfig{equation-associatorInv-rhs} \\
      \MorD{\unitl}    \quad & = \quad \stikzfig{equation-unitl-rhs} \qquad
      & \MorD{\unitlInv} \quad & = \quad \stikzfig{equation-unitlInv-rhs} \\
      \MorD{\unitr}    \quad & = \quad \stikzfig{equation-unitr-rhs} \qquad
      & \MorD{\unitrInv} \quad & = \quad \stikzfig{equation-unitrInv-rhs}
    \end{aligned}
\end{gather}
This is a functorial construction, yielding a monoidal equivalence between
$\CatC$ and $\CatD$, as we will prove in Section~\ref{section:strictness}.
Note that although the category $\CatD$ is essentially the same as that given by
Mac Lane~\cite[p.~257]{CFWM}, its construction differs in one key respect.
Namely, to define his equivalent strict category, Mac Lane relies on the
coherence theorem to define both composition of arrows and to ensure the
functors in the equivalence are monoidal.
In contrast, the adapter generators and equations of $\CatD$ mean that
Definition \ref{definition:catd} does not require use of the coherence theorem,
and can therefore be used to prove it.

The functoriality equations are so-called as they ensure functoriality of the
construction. The `adapter' equations and `associator/unitor' equations further
ensure this functor is \emph{monoidal} and it forms one half of a
\emph{monoidal equivalence}.
Sec.~\ref{section:strictness} will make it clear that these equations
are essentially obtained by freely adding the morphisms required by the
definition of a monoidal functor (\ref{definition:monoidal-functor}).

Besides its mathematical significance, the interest of this construction lies in
providing a means of manipulating morphisms of non-strict monoidal categories
graphically.
In particular, the $\adapter$ and $\adapterInv$ generators can be used to
explicitly summon and dispell the monoidal unit, while the $\bigadapter$ and
$\bigadapterInv$ generators can be thought of as systematic ways of packing and
unpacking wires into more complex wires with internal structure.
The next subsections will showcase how this additional layer of structure can be
useful in categorical models of computation.

\subsection{Circuit Description Languages with Tuples}
\label{section:motivating-examplesI}

Categorical models of circuit description languages are a prime source of examples of monoidal categories, for instance combinational~\cite{lafont_circuits} or sequential~\cite{DBLP:conf/csl/GhicaJL17} circuits. 
The graphical representation of circuits also fits naturally and intuitively the box-and-wire model used by string diagrams. 
More precisely, the circuit description languages in \emph{loc. cit.} (and variations thereof) are instances of \emph{strict} monoidal categories. 

From the point of view of \emph{expressiveness}, i.e. realising circuits with certain desired behaviours, the strict setting does not introduce any limitations. 
Consistent with this observation, standard hardware description languages (HDL) such as Verilog can also be modelled using a strict monoidal tensor. 
However, larger and more complex designs stand to benefit from the additional level of structure which a non-strict tensor can offer and, indeed, more modern HDLs, intended for more complex designs, such as SystemVerilog have syntactic facilities which require a non-strict tensor: \emph{struct}s. 

Consider the following simple example. 
Suppose that some circuitry is needed to process network packets, which consist of a header (of size $h=96$ bits), a payload (of size $p=896$ bits) and an error-correcting trailer (of size $e=32$ bits). 
In the older Verilog language, the header and the payload can be combined in a single, wider, data bus of $h+p=992$ bits, but the two components can only be extracted using numerical indexing. 
This is a primitive form of `flattening' a data structure into an array, and in the more modern SystemVerilog it can be avoided by using a \emph{struct}. 
This means that a data type of `message' (say $m$) can access its components as \emph{fields} (projections), namely $m.h$ and $m.p$. 
Since structs can have other structs as fields, the way in which the components are associated is relevant, which means that the tensor must no longer be strict. 

On the other hand, `flattening' the structure of a data bus to an array of bits can be useful. 
In the current example, in computing the error-correcting code $e$, the way the message is partitioned into header and payload is no longer relevant, so it is convenient to unpack the tensor $h\otimes p$ into a flat array of $h+p$ wires from which an error-correcting code $e$ is computed by a generic circuit of the appropriate width. 
Structures that can be flattened like this are called in SystemVerilog \emph{packed structs}, and to model them properly both strict and non-strict tensorial facilities are required in the categorical model. 

Finally, the error-correcting code can be packed with the original message into an error-correcting message with three components. 
It is obviously important to be able to retrieve the header, payload, and error-correcting code separately from the message, and it should be equally obvious that once the internal structure of the message is non-trivial a calculus of indices would be a complicated, awkward, and error-prone way to access the components. 
\[
  \centerline{\stikzfig{case-study-parity-3}}
\]
Graphically, this circuit is represented above.
In order to make this diagram completely formal, what we are using here is the $\CatD$ construction described in Sec.~\ref{section:graphical-language} applied to one of the categories $\CatC$ of digital circuits (combinational or sequential) mentioned earlier. 
This gives us the best of both worlds: the `non-strictness' of
circuits-with-tuples, and the graphical syntax of string diagrams.

\paragraph{Strictifying Strict Categories}
The `strictification' procedure is not just useful for
providing a graphical syntax for non-strict monoidal categories, but can also
provide a more ergonomic syntax for monoidal categories that are \emph{already} strict.
Suppose we wish to work in Lafont's strict monoidal category of
circuits~\cite{lafont_circuits}, and suppose we would like to define the `parity'
function used earlier.
Using our construction, we can define it recursively as follows:
\[
  \mathsf{parity}_0 = \stikzfig{case-study-recursive-definition-parity-basecase}
  \qquad \qquad
  \mathsf{parity}_n = \stikzfig{case-study-recursive-definition-parity}
\]
Notice that in the `base' language of Lafont's PROP of circuits we cannot truly
depict this diagram, since there is no way to treat a bundle of $n$ wires as a
pair of $1$ and $n - 1$ wires.
To do this formally we require the adapter morphisms as defined in Section
\ref{section:graphical-language}.

\subsection{Programming Languages}
\label{section:exstlc}
Programming languages, largely based on the lambda calculus, commonly include product formation as a syntactic feature. 
Therefore, a graphical syntax based on its categorical model, as used for example in~\cite{DBLP:journals/corr/abs-2107-13433}, needs to have a non-strict tensor.
However, having only the non-strict tensor leads to an awkward graphical syntax in which all generators have a single wire going in and a single wire going out. 
Diagrams in which the interfaces can be intermediated using lists of wires require mechanisms for strictification. 
This can be realised by applying the strictification construction to a Cartesian closed category, which will allow the expression of examples such as the one below. 

\topicsentence{
  Consider the simple task of summing two complex numbers,
  whose real and imaginary parts are encoded as floating-point numbers.
}
That is, while we have a primitive type of reals, we model complex numbers as
pairs $\Complex = \Real \times \Real$.
A natural way to write such a program in a diagrammatic form is pictured below.
\[
  \centerline{\stikzfig{case-study-stlc}}
\]
Even in categorical models of the simply-typed $\lambda$-calculus (STLC) without product, strictification has a role to play.
As usual, this role is cloaked in informality which in some contexts can lead to ambiguity. 
STLC is interpreted by giving meaning to type judgements $\Gamma\vdash t:T$ with $\Gamma$ a context, $t$ a term, and $T$ a type. 
The context $\Gamma=x_1:T_1,\ldots,x_n:T_n$ is a list of typed variables which is interpreted as the tensor $T_1\otimes\cdots\otimes T_n$, virtually always treated as if it were strict.
This informal strictification can be problematic though when product types are used, as the objects $T_i$ in the interpretation of the context also contain tensors.
So the strictification must be fine-grained enough to allow only the flattening of those tensors representing the comma of the context, and not those of the product formation.
Our approach offers this level of granularity. 

\subsection{Strict vs. Non-Strict String Diagrams}
\label{section:exnonstrict}
Our final example concerns the usability problems of non-strict diagrams \emph{without} strictification and illustrate how our approach to strictification with packing and unpacking wires makes rigorous the intuition that formulating certain properties in terms of strict monoidal categories does not entail a loss of generality. 
Our example uses
braided autonomous categories. Here, each object $A$ has a dual $A^*$, there exists a family of isomorphisms $c_{A,B}:A\otimes B\to B\otimes A$ called braidings, and families of adjunctions $\eta_A:I\to A^*\otimes A$, $\epsilon:A\otimes A^*\to I$ with certain properties which we may elide in the formulation of the example.
Consider the property of braided monoidal categories to be autonomous if and only if they are right-autonomous~\cite[Prop. 7.2]{JOYAL199320}. 
The proof is formulated in terms of string diagrams in~\cite[Lem. 4.17]{selinger2010survey}, which makes it more intuitive. 

The idea of the proof is to show that isomorphisms $b_A:A^{**}\to A$, $b_A^{-1}:A \to A^{**}$ can be constructed. They are defined as follows: 
\begin{align*}
& b_A = A^{**} \xrightarrow{\eta_A\otimes \id}A^*{\otimes} A{\otimes} A^{**}
\xrightarrow{\id\otimes c_{A,A^{**}}} A^*{\otimes} A^{**}{\otimes} A
\xrightarrow{\epsilon_{A^*}\otimes \id}A\\
& b_A^{-1} = A \xrightarrow{\id\otimes\eta_{A^*}}A{\otimes} A^{**}{\otimes} A^*
\xrightarrow{c^{-1}_{A^{**}, A}} A^{**}{\otimes} A{\otimes} A^*
\xrightarrow{\id\otimes\epsilon_{A}}A^{**}.
\end{align*}
The fact that $b_A;b_A^{-1}=\id$ becomes elegantly obvious when the terms are rendered as string diagrams which can be manipulated graphically:
\[ \centerline{\stikzfig{motivating-examples-lemma-1}} \]
The exposition includes the standard caveat that ``Here we have written, without loss of generality, as if [the category] were strict monoidal.''
We shall now show, graphically, that this is indeed the case.

First we note that in the non-strict setting (without strictification) all string diagrams must be equipped with gadgets that make sure that there is a single wire on the left, and a single wire on the right. 
These gadgets are of course the bundlers and unbundlers introduced earlier. 
Therefore, in the non-strict setting, taking into account all the relevant
associators, the diagram for $b_A$ becomes much more complicated, denying the
intuitiveness we expect from a graphical notation (see below).
\[
  \centerline{\scalebox{0.7}{
\begingroup%
  \makeatletter%
  \providecommand\color[2][]{%
    \errmessage{(Inkscape) Color is used for the text in Inkscape, but the package 'color.sty' is not loaded}%
    \renewcommand\color[2][]{}%
  }%
  \providecommand\transparent[1]{%
    \errmessage{(Inkscape) Transparency is used (non-zero) for the text in Inkscape, but the package 'transparent.sty' is not loaded}%
    \renewcommand\transparent[1]{}%
  }%
  \providecommand\rotatebox[2]{#2}%
  \newcommand*\fsize{\dimexpr\f@size pt\relax}%
  \newcommand*\lineheight[1]{\fontsize{\fsize}{#1\fsize}\selectfont}%
  \ifx\svgwidth\undefined%
    \setlength{\unitlength}{332.68763339bp}%
    \ifx\svgscale\undefined%
      \relax%
    \else%
      \setlength{\unitlength}{\unitlength * \real{\svgscale}}%
    \fi%
  \else%
    \setlength{\unitlength}{\svgwidth}%
  \fi%
  \global\let\svgwidth\undefined%
  \global\let\svgscale\undefined%
  \makeatother%
  \begin{picture}(1,0.12887253)%
    \lineheight{1}%
    \setlength\tabcolsep{0pt}%
    \put(0,0){\includegraphics[width=\unitlength,page=1]{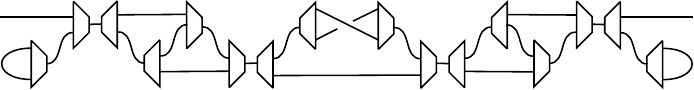}}%
    \put(0.10755479,0.08814166){\color[rgb]{0,0,0}\makebox(0,0)[lt]{\lineheight{1.25}\smash{\begin{tabular}[t]{l}$1$\end{tabular}}}}%
    \put(0.14833362,0.08858006){\color[rgb]{0,0,0}\makebox(0,0)[lt]{\lineheight{1.25}\smash{\begin{tabular}[t]{l}$1$\end{tabular}}}}%
    \put(0.04873201,0.02866706){\color[rgb]{0,0,0}\makebox(0,0)[lt]{\lineheight{1.25}\smash{\begin{tabular}[t]{l}$2$\end{tabular}}}}%
    \put(0.21186202,0.03174496){\color[rgb]{0,0,0}\makebox(0,0)[lt]{\lineheight{1.25}\smash{\begin{tabular}[t]{l}$2$\end{tabular}}}}%
    \put(0.33344002,0.03020601){\color[rgb]{0,0,0}\makebox(0,0)[lt]{\lineheight{1.25}\smash{\begin{tabular}[t]{l}$3$\end{tabular}}}}%
    \put(0.3757615,0.03020601){\color[rgb]{0,0,0}\makebox(0,0)[lt]{\lineheight{1.25}\smash{\begin{tabular}[t]{l}$3$\end{tabular}}}}%
    \put(0.27342047,0.08791713){\color[rgb]{0,0,0}\makebox(0,0)[lt]{\lineheight{1.25}\smash{\begin{tabular}[t]{l}$4$\end{tabular}}}}%
    \put(0.43655048,0.08714772){\color[rgb]{0,0,0}\makebox(0,0)[lt]{\lineheight{1.25}\smash{\begin{tabular}[t]{l}$4$\end{tabular}}}}%
    \put(0.83591123,0.08791713){\color[rgb]{0,0,0}\makebox(0,0)[lt]{\lineheight{1.25}\smash{\begin{tabular}[t]{l}$5$\end{tabular}}}}%
    \put(0.87669373,0.08791713){\color[rgb]{0,0,0}\makebox(0,0)[lt]{\lineheight{1.25}\smash{\begin{tabular}[t]{l}$5$\end{tabular}}}}%
    \put(0.77435274,0.03020601){\color[rgb]{0,0,0}\makebox(0,0)[lt]{\lineheight{1.25}\smash{\begin{tabular}[t]{l}$6$\end{tabular}}}}%
    \put(0.93671334,0.02943647){\color[rgb]{0,0,0}\makebox(0,0)[lt]{\lineheight{1.25}\smash{\begin{tabular}[t]{l}$6$\end{tabular}}}}%
    \put(0.61045325,0.03020601){\color[rgb]{0,0,0}\makebox(0,0)[lt]{\lineheight{1.25}\smash{\begin{tabular}[t]{l}$7$\end{tabular}}}}%
    \put(0.65123569,0.03020601){\color[rgb]{0,0,0}\makebox(0,0)[lt]{\lineheight{1.25}\smash{\begin{tabular}[t]{l}$7$\end{tabular}}}}%
    \put(0.5488947,0.08714759){\color[rgb]{0,0,0}\makebox(0,0)[lt]{\lineheight{1.25}\smash{\begin{tabular}[t]{l}$8$\end{tabular}}}}%
    \put(0.71279425,0.08714759){\color[rgb]{0,0,0}\makebox(0,0)[lt]{\lineheight{1.25}\smash{\begin{tabular}[t]{l}$8$\end{tabular}}}}%
  \end{picture}%
\endgroup%
}}
\]
This is why a naive approach to non-strict string diagram construction is not effective. 
However, the complications are only an artefact of the construction of the diagram in a purely non-strict setting.
The strictification equations come to rescue and, in this case, cancel out all
bundler-unbundler pairs in the order indicated by the numerical labels attached
to them, resulting in exactly the same diagram of $b_A$ that was constructed in
the strict setting.
So, indeed, working in the strict setting implied no loss of generality!

\section{Strictness}
\label{section:strictness}
\topicsentence{
  We now show that $\CatC$ is monoidally equivalent to $\CatD$, constituting a
  proof of Mac Lane's strictness theorem, since $\CatC$ is an arbitrary monoidal
  category.
}
Our approach is to define monoidal functors $\FunF : \CatC \to \CatD : \FunG$,
and we begin by recalling the definition of monoidal functor.

\begin{definition} \deftext{Monoidal Functor} \\
  \label{definition:monoidal-functor}
  Let $(\cat{C}, \otimes, \unit_\cat{C})$
  and $(\cat{D}, \bullet, \unit_\cat{D})$
  be monoidal categories.
  A \textit{monoidal functor} is a functor $F : \cat{C} \to \cat{D}$
  equipped with natural isomorphisms
  $\bigadapter_{X,Y} : F(X) \bullet F(Y) \to F(X \otimes Y)$
  and
  $\phi : \unit_\cat{D} \to F(\unit_\cat{C})$
  such that the following diagrams commute for all objects $A, B, C \in \cat{C}$.
  
  \begin{equation}
    \label{equation:monoidal-hexagon}
    \scalebox{0.7}{\begin{tikzpicture}[node distance=2cm, baseline=(current  bounding  box.center)]
  \node (A) [] {$(F(A) \bullet F(B)) \bullet F(C)$};
  \node (B) [right of=A,xshift=3cm] {$F(A) \bullet (F(B) \bullet F(C))$};
  \node (C) [below of=B] {$F(A) \bullet F(B \otimes C)$};
  \node (D) [below of=C] {$F(A \otimes (B \otimes C))$};

  \node (F) [below of=A] {$F(A \otimes B) \bullet F(C)$};
  \node (E) [below of=F] {$F((A \otimes B) \otimes C)$};

  \draw [->,thick] (B) to [] node[above] {$\assoc_\cat{D}$} (A);
  \draw [->,thick] (B) to [] node[right] {$\id_{F(A)} \bullet \Phi_{B,C}$} (C);
  \draw [->,thick] (C) to [] node[right] {$\Phi_{A,B \otimes C}$} (D);

  \draw [->,thick] (A) to [] node[left] {$\Phi_{A,B} \bullet \id_{F(C)}$} (F);
  \draw [->,thick] (F) to [] node[left] {$\Phi_{A \otimes B, C}$} (E);
  \draw [->,thick] (D) to [] node[below] {$F(\assoc_\cat{C})$} (E);
\end{tikzpicture}}
  \end{equation}

  \begin{equation}
    \label{equation:monoidal-squares}
    \scalebox{0.7}{\begin{tikzpicture}[node distance=3.25cm, baseline=(current  bounding  box.center)]
  \node (A) [] {$F(A) \bullet \unit_\cat{D}$};
  \node (B) [right of=A] {$F(A) \bullet F(\unit_\cat{C})$};
  \node (C) [below of=B] {$F(A \otimes \unit_\cat{C})$};
  \node (D) [left  of=C] {$F(A)$};

  \draw [->,thick] (A) to [] node[above] {$\id_{F(A)} \bullet \phi$} (B);
  \draw [->,thick] (B) to [] node[right] {$\Phi_{A, \unit_\cat{C}}$} (C);
  \draw [->,thick] (C) to [] node[below] {$F(\rho_\cat{C})$} (D);
  \draw [->,thick] (A) to [] node[left ] {$\rho_{\cat{D}}$} (D);
\end{tikzpicture}
\enspace
\begin{tikzpicture}[node distance=3.25cm, baseline=(current  bounding  box.center)]

  \node (A) []           {$\unit_\cat{D} \bullet F(B)$};
  \node (B) [right of=A] {$F(\unit_\cat{C}) \bullet F(B)$};
  \node (C) [below of=B] {$F(\unit_\cat{C} \otimes B)$};
  \node (D) [left  of=C] {$F(B)$};

  \draw [->,thick] (A) to [] node[above] {$\phi \bullet \id_{F(B)}$} (B);
  \draw [->,thick] (B) to [] node[right] {$\Phi_{\unit_\cat{C}, B}$} (C);
  \draw [->,thick] (C) to [] node[below] {$F(\lambda_\cat{C})$} (D);
  \draw [->,thick] (A) to [] node[left ] {$\lambda_\cat{D}$} (D);
\end{tikzpicture}}
  \end{equation}

\end{definition}

\topicsentence{With this definition it is straightforward to see how to define a
monoidal functor from $\CatC$ to $\CatD$.}

\begin{definition}
  Let $\FunF : \CatC \to \CatD$ be the \emph{strictification functor}
  defined on objects and morphisms as
  $\FunF(A) \defeq{} \ObjD{A}$
  and
  $\FunF(f) \defeq{} \MorD{f}$,
  respectively
\end{definition}

\begin{proposition} $(\FunF, \bigadapter, \adapter)$ is a monoidal functor.
  \label{proposition:funf-is-monoidal}
\end{proposition}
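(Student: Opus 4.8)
The plan is to unfold the four requirements in the definition of a monoidal functor (Definition~\ref{definition:monoidal-functor}) and match each one against the corresponding family of defining equations of $\CatD$ from Definition~\ref{definition:catd}. As the surrounding text notes, the presentation of $\CatD$ is obtained by freely adjoining exactly the morphisms demanded by that definition, so the argument is less a computation than a check that each axiom \emph{is} one of the imposed relations, or follows from them by cancelling adapter/inverse-adapter pairs. Explicitly, I must verify: (i) that $\FunF$ is a functor; (ii) that $\Adapter_{X,Y} : \ObjD{X} \TensorD \ObjD{Y} \to \ObjD{X \TensorC Y}$ is a natural isomorphism and $\adapter : \unitD \to \ObjD{\unitC}$ is an isomorphism; and (iii) that the associator hexagon and the two unitor squares commute.

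Parts (i) and (ii) are immediate from the generating relations. Functoriality of $\FunF$---that is, $\FunF(\id) = \id$ and $\FunF(g \circ f) = \FunF(g) \circ \FunF(f)$---is exactly the content of the functoriality equations~\eqref{equation:catd-functor-equations}, which apply since every morphism $f : A \to B$ of $\CatC$ has its own generator $\MorD{f}$. Naturality of $\Adapter$ together with invertibility of $\Adapter$ and $\adapter$ (with inverses given by the generators $\AdapterInv$ and $\adapterInv$) are in turn precisely what the naturality and isomorphism relations collected in~\eqref{equation:catd-adapter-equations} assert; there is no separate naturality condition to discharge for $\adapter$, since it is a single morphism rather than an indexed family.

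The coherence axioms (iii) are the only part carrying genuine content, and the bulk of the work lies here. The idea is to use strictness of $\CatD$, so that $\assocD$, $\unitlD$ and $\unitrD$ are identities and each axiom collapses to an equation between composites built solely from the adapters and from the generators $\MorD{\assoc}$, $\MorD{\unitl}$, $\MorD{\unitr}$, i.e. the images $\FunF(\assocC)$, $\FunF(\unitlC)$, $\FunF(\unitrC)$. Into each such composite I substitute the \emph{definitions} of $\MorD{\assoc}$, $\MorD{\unitl}$, $\MorD{\unitr}$ supplied by the associator/unitor equations~\eqref{equation:catd-monoidal-equations}. Because those right-hand sides are chosen to be exactly the adapter composites that the hexagon and squares require, the two sides of each axiom come to differ only by adjacent $\Adapter\,;\AdapterInv$ (respectively $\adapter\,;\adapterInv$) pairs, which vanish by the isomorphism relations of~\eqref{equation:catd-adapter-equations}. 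The main obstacle is therefore not conceptual but notational: one must translate each commuting diagram of Definition~\ref{definition:monoidal-functor} faithfully into a string-diagrammatic equation, correctly interleaving the substituted structural morphisms with the tensor and identity wires, and then confirm that every adapter pair so produced indeed cancels. Since this cancellation is exactly the behaviour the relations of $\CatD$ were engineered to guarantee, I expect only bookkeeping, not difficulty.
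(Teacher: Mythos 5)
Your proposal is correct and follows essentially the same route as the paper: functoriality from the functoriality equations~\eqref{equation:catd-functor-equations}, the (natural) isomorphism requirements from the adapter equations~\eqref{equation:catd-adapter-equations}, and the hexagon/square axioms from the associator/unitor equations~\eqref{equation:catd-monoidal-equations}. The only difference is presentational: where the paper simply asserts that equations~\eqref{equation:catd-monoidal-equations} \emph{are} the commuting diagrams of Definition~\ref{definition:monoidal-functor} (rearranged using strictness of $\CatD$ and invertibility of the adapters), you spell out that rearrangement as an explicit substitute-and-cancel computation, which is exactly the content of the paper's claim.
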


\begin{proof}
  $\FunF$ preserves identities and composition (and is therefore a functor) by
  the functor equations \eqref{equation:catd-functor-equations}:
  \[
    \FunF(\id_A) = \MorD{\id_A} = \id_{\ObjD{A}}
    \qquad
    \qquad
    \qquad
    \FunF(f \cp g) = \MorD{f \cp g} = \MorD{f} \cp \MorD{g} = \FunF(f) \cp \FunF(g)
  \]
  It is a \emph{monoidal} functor using the adapter generators
  $\bigadapter = \generator{g-big-adapter}$ and
  $\adapter = \generator{g-adapter}$
  from \eqref{equation:catd-generators}.
  For this to work, we must have that $\generator{g-big-adapter}$ is a natural isomorphism
  and $\generator{g-adapter}$ an isomorphism, respectively.
  This is a straightforward consequence of the adapter equations
  \eqref{equation:catd-adapter-equations}:
  $\bigadapterInv \cp (\MorD{f} \TensorD \MorD{g})
      = \bigadapterInv \cp (\MorD{f} \TensorD \MorD{g}) \cp \bigadapter \cp \bigadapterInv
      = \MorD{f \TensorC g} \cp \bigadapterInv$
  and $\phi \cp \phi^* = \id$ by definition.
  Similarly, we require that the diagrams of \eqref{equation:monoidal-hexagon}
  and \eqref{equation:monoidal-squares} commute.
  Again, this is precisely what the the associator/unitor equations
  \eqref{equation:catd-monoidal-equations} state, and so $\FunF$ is a monoidal
  functor.
\end{proof}

\begin{remark}
  Notice that $\CatD$ is \emph{defined} by freely adding the requirements of
  Definition \ref{definition:monoidal-functor}.
  Generators $\generator{g-big-adapter}$ and $\generator{g-adapter}$ and equations
  \eqref{equation:catd-adapter-equations} give the natural isomorphism
  $\bigadapter$ and isomorphism $\adapter$,
  while the commuting diagrams \eqref{equation:monoidal-hexagon} and
  \eqref{equation:monoidal-squares} are precisely the
  `associator/unitor' equations \eqref{equation:catd-monoidal-equations}.
\end{remark}

\topicsentence{
  We can now define the other half of the monoidal equivalence $\FunF \dashv
  \FunG$.
}
In doing so, we'll make use of the fact that morphisms of a monoidal category
can be written in a `sequential normal form'
(Appendix \ref{section:sequential-normal-form}), i.e. as a series of `slices'
\[ (\id \otimes g_1 \otimes \id) \cp (\id \otimes g_2 \otimes \id) \cp \ldots \cp (\id \otimes g_n \otimes \id) \]
where each $g_i$ is a generator.
We take advantage of this form to define $\FunG$: our definition is defined on
`slices' $\id_X \TensorD q \TensorD \id_Y$ for some generator $q$,
and then freely on composition so that $\FunG(f \cp g) = \FunG(f) \cp \FunG(g)$.

\begin{definition}
  \label{definition:fung}
  We define the nonstrictification functor
  $\FunG : \CatD \to \CatC$ inductively on objects:
  \[
    \FunG(\unitD)              \defeq \unitC
    \qquad \qquad
    \FunG(\ObjD{A})            \defeq A
    \qquad \qquad
    \FunG(\ObjD{A} \TensorD R) \defeq A \TensorC \FunG(R)
  \]
  And on morphisms we give a recursive definition, with the following base cases:
  \begin{equation*}
    \begin{aligned}
      \FunG(\id_{\unitD})                       & \defeq \id_{\unitC} \\
      \FunG(\MorD{f})                           & \defeq f \\
      \FunG(\bigadapter_{A,B})                     & \defeq \id_{A \otimes B}    = \FunG(\bigadapterInv_{A,B}) \\
      \FunG(\adapter)                           & \defeq \id_{\unitC} = \FunG(\adapterInv)
    \end{aligned}
    \quad
    \begin{aligned}
      \FunG(\MorD{f} \TensorD \id_Y)            & \defeq f \TensorC \id_{\FunG(Y)} \\
      \FunG(\bigadapter_{A,B} \TensorD \id_Y)      & \defeq \assoc_{A,B, \FunG(Y)} \\
      \FunG(\bigadapterInv_{A,B} \TensorD \id_Y)   & \defeq \assocInv_{A,B, \FunG(Y)} \\
      \FunG(\adapter \TensorD \id_Y)            & \defeq \unitlInv_{\FunG(Y)} \\
      \FunG(\adapterInv \TensorD \id_Y)         & \defeq  \unitl_{\FunG(Y)}
    \end{aligned}
    \quad
    \begin{aligned}
      \FunG(\id_{\ObjD{A}} \TensorD \MorD{f})           & \defeq \id_A \TensorC f \\
      \FunG(\id_{\ObjD{A}} \TensorD \bigadapter_{B,C})     & \defeq \id_{A \TensorC (B \TensorC C)} \\
      \FunG(\id_{\ObjD{A}} \TensorD \bigadapterInv_{B,C} ) & \defeq \id_{A \TensorC (B \TensorC C)} \\
      \FunG(\id_{\ObjD{A}} \TensorD \adapter)           & \defeq \unitrInv_{A} \\
      \FunG(\id_{\ObjD{A}} \TensorD \adapterInv)        & \defeq \unitr_{A}
    \end{aligned}
  \end{equation*}
  With a single recursive case, for
  $q \in \{ \bigadapter, \adapter, \bigadapterInv, \adapterInv, \id_{\ObjD{Q}} \}$
  \begin{align*}
    \FunG(\id_{\ObjD{A}} \TensorD q \TensorD r) \defeq \id_A \TensorC \FunG(q \TensorD r)
  \end{align*}
  Finally take $\FunG(f \cp g) \defeq \FunG(f) \cp \FunG(g)$.
\end{definition}

This definition is well defined with respect to the equations of Definition
\ref{definition:catd};
we give a proof in Appendix \ref{section:g-well-defined},
where we also note that $N(f)$ is the same regardless of which `sequential normal form'
decomposition we choose for $f$.

\begin{remark}
  The definition of $\FunG$ can be explained more intuitively in terms of
  programming.
  If we think of each `slice' of the sequential normal form as a list of
  primitive arrows of $\CatC$,
  then the definition of $\FunG$ is essentially a list recursion in which
  we have a separate case for $1$, $2$, and $n$-element lists.
\end{remark}



\topicsentence{Now we will show that $\FunG$ is a \emph{monoidal} functor.}
To do this, we must specify the `coherence maps': a natural isomorphism
$\Psi_{X,Y} : \FunG(X) \TensorC \FunG(Y) \to \FunG(X \TensorD Y)$
and isomorphism
$\psi : \unitC \to \FunG(\unitD)$
as mandated by Definition \ref{definition:monoidal-functor}.

\begin{definition} 
  We define $\Psi$, the coherence natural isomorphism for $\FunG$, in the following cases:
  \[
    \Psi_{\unitD, \unitD}
        \defeq \unitl_{\unitC} = \unitr_{\unitC}
    \qquad \qquad
    \Psi_{X, \unitD}
        \defeq \unitr_{\FunG(X)}
    \qquad \qquad
    \Psi_{\unitD, Y}
        \defeq \unitl_{\FunG(Y)}
  \]
  \[
    \Psi_{\ObjD{A}, Y}
        \defeq \id_{A \TensorC \FunG(Y)}
    \qquad \qquad
    \Psi_{\ObjD{A} \TensorD X, Y}
        \defeq \assocInv_{A, \FunG(X), \FunG(Y)} \cp (\id_{A} \TensorC \Psi_{X, Y})
  \]
\end{definition}

\begin{definition} 
  The coherence isomorphism $\psi$ for $\FunG$ is defined as follows:
  \begin{align*}
    \psi_{\unitC}
        & \defeq \id_{\unitC}
  \end{align*}
\end{definition}

\begin{remark}
  Note that both $\unitl_{\unitC}$ and $\unitr_{\unitC}$ have the correct type
  as a choice for $\Psi_{\unitD, \unitD}$.
  In fact, they are equal: unitors coincide at the unit object, i.e.
  $\unitl_{\unitC} = \unitr_{\unitC}$, as noted
  in~\cite[Corollary 2.2.5]{tensor_categories}.
\end{remark}

\begin{proposition} $(\FunG, \Psi, \psi)$ is a monoidal functor.
  \label{proposition:fung-is-monoidal}
\end{proposition}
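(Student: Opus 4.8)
The plan is to check, in turn, the three requirements that Definition~\ref{definition:monoidal-functor} places on the triple $(\FunG, \Psi, \psi)$: that $\FunG$ is a functor, that $\psi$ and the components $\Psi_{X,Y}$ are isomorphisms with $\Psi$ natural, and that the coherence diagrams \eqref{equation:monoidal-hexagon} and \eqref{equation:monoidal-squares} commute. Functoriality of $\FunG$---preservation of identities and of composition---is exactly the well-definedness statement established in Appendix~\ref{section:g-well-defined}, so I would cite this and devote the rest of the argument to the monoidal structure. The map $\psi = \id_{\unitC}$ is trivially invertible, and each $\Psi_{X,Y}$ is an isomorphism by a short induction on the object $X$: every defining clause of $\Psi$ is a composite of unitors, associators, and identities of $\CatC$, all of which are invertible, the recursive clause $\Psi_{\ObjD{A} \TensorD X, Y} = \assocInv_{A, \FunG(X), \FunG(Y)} \cp (\id_A \TensorC \Psi_{X,Y})$ being a composite of an isomorphism with $\id_A \TensorC (-)$ applied to an isomorphism supplied by the induction hypothesis.

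The substantive step is naturality of $\Psi$, i.e. that $(\FunG(u) \TensorC \FunG(v)) \cp \Psi_{X',Y'} = \Psi_{X,Y} \cp \FunG(u \TensorD v)$ for all $u : X \to X'$ and $v : Y \to Y'$ in $\CatD$. Since $\FunG$ is functorial and $u \TensorD v = (u \TensorD \id_Y) \cp (\id_{X'} \TensorD v)$, it suffices to prove naturality separately in each variable; and since every morphism of $\CatD$ is a composite of slices of the sequential normal form of Appendix~\ref{section:sequential-normal-form}, it further suffices to treat the case in which the varying morphism is a single generator tensored with identities. With the other object argument fixed, I would then induct on the structure of the objects, in each case reducing the square to the defining equations of $\FunG$ on slices (Definition~\ref{definition:fung}) together with the naturality of $\assocC$, $\unitlC$ and $\unitrC$ in $\CatC$. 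This is the step I expect to be the main obstacle: it is not conceptually deep, but it demands a disciplined case analysis across the five clauses of $\Psi$ and the base cases of $\FunG$, and the indices on the associators and unitors must be tracked carefully to keep all composites well-typed.

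It remains to verify the coherence diagrams. The two unitor squares \eqref{equation:monoidal-squares} are almost immediate: because $\CatD$ is strict its unitors are identities, so $\FunG$ sends them to identities, and combined with $\psi = \id_{\unitC}$ and the clauses $\Psi_{\unitD, Y} = \unitlC_{\FunG(Y)}$ and $\Psi_{X, \unitD} = \unitrC_{\FunG(X)}$ each square collapses to a trivial equality. For the hexagon \eqref{equation:monoidal-hexagon} I would again use strictness of $\CatD$ to discard the $\FunG(\assocD)$ edge (since $\FunG(\assocD) = \FunG(\id) = \id$), reducing the axiom to a compatibility of $\Psi$ with $\assocC$ of the form $(\Psi_{X,Y} \TensorC \id_{\FunG(Z)}) \cp \Psi_{X \TensorD Y, Z} = \assocC_{\FunG(X), \FunG(Y), \FunG(Z)} \cp (\id_{\FunG(X)} \TensorC \Psi_{Y,Z}) \cp \Psi_{X, Y \TensorD Z}$. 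I would prove this by induction on $X$: the case $X = \ObjD{A}$ holds essentially by the definition of the recursive clause of $\Psi$, the case $X = \unitD$ follows from unit coherence in $\CatC$ (the triangle identity and the equality $\unitlC_{\unitC} = \unitrC_{\unitC}$ already noted), and the inductive case $X = \ObjD{A} \TensorD X'$ unfolds the recursive clause and closes using the pentagon identity of $\CatC$ together with the induction hypothesis. Assembling these facts gives that $(\FunG, \Psi, \psi)$ is a monoidal functor.
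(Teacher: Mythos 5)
Your proposal is correct and follows essentially the same route as the paper: the unitor squares collapse using $\psi = \id$ and the clauses $\Psi_{X,\unitD} = \unitr_{\FunG(X)}$, $\Psi_{\unitD,Y} = \unitl_{\FunG(Y)}$, and the hexagon (reduced to a pentagon since strictness of $\CatD$ gives $\FunG(\assocD) = \id$) is verified by the same induction on the first argument, with the unit case resting on unit coherence in $\CatC$, the singleton case $\ObjD{A}$ holding by the recursive clause of $\Psi$, and the inductive step closed by the pentagon axiom and naturality of $\assoc$. The only divergence is that you treat naturality of $\Psi$ as a substantive step requiring a slice-by-slice induction over $\CatD$-morphisms, whereas the paper dispatches it in a single sentence (the components are composites of natural isomorphisms); your more careful treatment is, if anything, a point in your favour, since naturality with respect to arbitrary arrows of $\CatD$ does genuinely require the inductive check you outline rather than following formally from naturality of the constituent maps in $\CatC$.
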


\begin{proof}
  It is clear that $\Psi$ and $\psi$ are natural isomorphisms since they are
  both composites of natural isomorphisms.
  Thus it remains to check the diagrams of Definition
  \ref{definition:monoidal-functor} commute.

  The squares \eqref{equation:monoidal-squares} commute because $\psi = \id$,
  and $\Psi_{A,\unitD} = \unitr$
  and $\Psi_{\unitD,B} = \unitl$
  by definition.

  Now let us check that the hexagon \eqref{equation:monoidal-hexagon} commutes.
  Note that in the following we use that $\FunG(\assocD) = \id$, because $\CatD$
  is strict, and so the hexagon axiom becomes a pentagon.

  We will approach the problem inductively, checking base cases where
  $A = \unit$ and $A = \ObjD{A}$, and finally the inductive step with
  $A = \ObjD{A} \TensorD R$.
  Let us begin with $A = \unit$, and taking the outer path of the hexagon we
  calculate as follows:
  \begin{align*}
      (\id_{\unitC} \TensorC \Psi_{B,C})
    & \cp \Psi_{\unitC, B \TensorD C}
      \cp \Psi^{-1}_{B,C}
      \cp (\Psi_{\unitC, B} \TensorC \id_{\FunG(C)})^{-1} \\
    & = (\id_{\unitC} \TensorC \Psi_{B,C})
      \cp \unitl_{\FunG(B \TensorD C)}
      \cp \Psi^{-1}_{B,C}
      \cp (\unitl_{\FunG(B)} \TensorC \id_{\FunG(C)})^{-1} \\
    & = \unitl_{\FunG(B) \TensorC \FunG(C)}
      \cp (\unitl_{\FunG(B)} \TensorC \id_{\FunG(C)})^{-1} \\
    & = \assoc_{\unitC, \FunG(B), \FunG(C)}
  \end{align*}
  Wherein we expanded the definition of $\Psi$, then used naturality of $\Psi_{B,C}$
  before applying the monoidal triangle lemma of~\cite[(2.12)]{tensor_categories}.

  Now consider the second base case, where $A$ is the `singleton list' $\ObjD{A}$.
  In this case, the hexagon diagram commutes immediately
  because $\Psi_{\ObjD{A},B} = \id_{\ObjD{A} \TensorC \FunG(B)}$
  and $\Psi_{\ObjD{A}, B \TensorD C} = \id_{\ObjD{A} \TensorC \FunG(B \TensorD C)}$.
  More explicitly, we calculate as follows, starting again with the outer path
  of the hexagon and expanding definitions:
  \begin{align*}
    (\id_A \TensorC \Psi_{B,C})
    & \cp \Psi_{A, B \TensorD C}
      \cp \Psi^{-1}_{A \TensorD B, C}
      \cp (\Psi_{\ObjD{A},B} \TensorC \id_{\FunG(C)}) \\
    & = (\id_A \TensorC \Psi_{B,C})
      \cp (\id_A \TensorC \Psi_{B,C})^{-1}
      \cp \assoc_{A, \FunG(B), \FunG(C)} \\
    & = \assoc_{A, \FunG(B), \FunG(C)}
  \end{align*}
  Finally let us prove the inductive step.
  Assume that the hexagon commutes for objects $R$, $B$, $C$, giving us the
  equation
  \[
    \Psi_{R, B \TensorD C} \cp \Psi^{-1}_{R \TensorD B, C}
      = (\id_{\FunG(R)} \TensorC \Psi^{-1}_{B,C})
      \cp \assoc_{\FunG(R), \FunG(B), \FunG(C)}
      \cp (\Psi_{R,B} \TensorC \id_{\FunG(C)})
  \]
  We may then rewrite the following subterm of the monoidal hexagon as follows:
  \[
    \id_A \TensorC (\Psi_{R, B \TensorD C} \cp \Psi^{-1}_{R \TensorD B, C})
      =   \id_A \TensorC (\id_{\FunG(R)} \TensorC \Psi^{-1}_{B,C})
      \cp \id_A \TensorC \assoc_{\FunG(R), \FunG(B), \FunG(C)}
      \cp \id_A \TensorC (\Psi_{R,B} \TensorC \id_{\FunG(C)})
  \]
  We can then rewrite
  $\id_A \TensorC \assoc_{\FunG(R), \FunG(B) \FunG(C)}$
  using the monoidal category pentagon axiom,
  and then use naturality of $\assoc$ to reduce the outer path of the
  monoidal hexagon until we are left with
  $\assoc_{A \TensorC \FunG(R), \FunG(B), \FunG(C)}$,
  as required.
\end{proof}

\topicsentence{
  Finally, we must check that $\FunF$ and $\FunG$ indeed form an equivalence.
}
First, recall the definition

\begin{definition}[Equivalence of categories]
  An equivalence is a pair of functors
  $ \cat{C} \smash{\overset{F}{\rightarrow} \atop \underset{G}{\leftarrow}} \cat{D} $
  and a pair of natural isomorphisms
  $\eta : \id_{\cat{C}} \to G \circ F$
  and
  $\epsilon : F \circ G \to \id_{\cat{D}}$.
\end{definition}


\topicsentence{We begin by showing naturality of $\eta$.}

\begin{proposition} $\FunG \circ \FunF = \id_{\CatC}$.
  \label{proposition:f-then-g-is-identity}
\end{proposition}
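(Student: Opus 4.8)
The plan is to verify the equality of functors $\FunG \circ \FunF = \id_\CatC$ by direct computation on objects and on morphisms, appealing only to the defining clauses of $\FunF$ and $\FunG$. I expect this to be essentially immediate, since $\FunF$ lands in the ``singleton'' fragment of $\CatD$ on which $\FunG$ is specified by a single base clause, so the proposition should reduce to an unfolding of definitions rather than an induction.

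First I would dispatch the object level. For any $A \in \CatC$ we have $\FunF(A) = \ObjD{A}$ by definition of $\FunF$, and $\FunG(\ObjD{A}) = A$ by the corresponding base clause of Definition \ref{definition:fung}; hence $\FunG(\FunF(A)) = A$ on the nose. Next I would treat morphisms. The key observation is that $\FunF$ sends \emph{every} morphism $f : A \to B$ of $\CatC$ to the single generator $\MorD{f} : \ObjD{A} \to \ObjD{B}$, whether or not $f$ is itself composite: this is consistent because the functoriality equations \eqref{equation:catd-functor-equations} give $\MorD{f \cp g} = \MorD{f} \cp \MorD{g}$ and $\MorD{\id_A} = \id_{\ObjD{A}}$. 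Since $\MorD{f}$ is already a single slice of the sequential normal form, the base clause $\FunG(\MorD{f}) \defeq f$ applies directly and yields $\FunG(\FunF(f)) = f$ for every morphism. Combined with the object computation, this establishes $\FunG \circ \FunF = \id_\CatC$ as a strict equality of functors, as stated.

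The main point to watch is consistency rather than difficulty: one should confirm that the two routes to $\FunG(\id_{\ObjD{A}})$ agree, namely the base clause $\FunG(\MorD{\id_A}) = \id_A$ (using $\MorD{\id_A} = \id_{\ObjD{A}}$) and functoriality of $\FunG$, which forces $\FunG(\id_{\ObjD{A}}) = \id_{\FunG(\ObjD{A})} = \id_A$. These coincide, so no coherence obstruction arises. The only genuine prerequisite is that $\FunG$ is well-defined with respect to the equations of Definition \ref{definition:catd}, which is handled separately in the appendix; granting that, the proposition follows in a line from the definitions of $\FunF$ and $\FunG$.
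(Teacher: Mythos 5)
Your proposal is correct and follows essentially the same route as the paper, whose entire proof is the one-line unfolding $\FunG(\FunF(f)) = \FunG(\MorD{f}) = f$; your additional checks (objects, agreement of $\FunG(\id_{\ObjD{A}})$ via the two clauses, and deferral of well-definedness of $\FunG$ to the appendix) merely make explicit what the paper leaves implicit.
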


\begin{proof} $\FunG(\FunF(f)) = \FunG(\MorD{f}) = f = \id_{\CatD}(f)$
\end{proof}

\begin{remark}
  Note that Proposition \ref{proposition:f-then-g-is-identity} shows that
  the composite $\FunG \circ \FunF$ is actually \emph{equal} to the identity
  functor, and thus $\eta_A = \id_A$.
  In fact, this will make the composite of the two functors a
  \emph{split idempotent}.
\end{remark}

\topicsentence{Now we prove naturality of $\epsilon$.}
This proof is somewhat more involved: unlike
\ref{proposition:f-then-g-is-identity}, the composite $\FunF \circ \FunG$ is
merely isomorphic to the identity functor, not equal on the nose.
Thus, we begin with an inductive definition:

\begin{definition} 
  We define the (monoidal) natural isomorphism
  $\epsilon : \FunF \circ \FunG \to \id_{\CatD}$
  for the composite $\FunF \circ \FunG$ inductively:
  \label{definition:epsilon}
  \begin{equation}
    \begin{aligned}
      \epsilon_{\unitD}               & \defeq \adapterInv    & = & \stikzfig{generator-adapterInv-bare} \\
      \epsilon_{\ObjD{A}}             & \defeq \id_{\ObjD{A}} & = & \stikzfig{id} \\
      \epsilon_{\ObjD{A} \TensorD R}  & \defeq \bigadapterInv \cp (\id_{\ObjD{A}} \TensorD \epsilon_R) & = & \stikzfig{definition-epsilon-recursive} \\
    \end{aligned}
  \end{equation}
\end{definition}

\begin{proposition} If $\epsilon$ is natural for $f$ and $g$, then it is natural for $f \cp g$.
  \label{proposition:epsilon-commutes-composition}
\end{proposition}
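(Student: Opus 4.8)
The plan is to observe that naturality of a natural transformation is automatically closed under composition of the morphisms against which it is tested, so the only ingredients needed are the functoriality of $\FunF$ and of $\FunG$. Recall that $\epsilon$ being natural for a morphism $h : X \to Y$ means precisely that the square
\[ \epsilon_X \cp h = \FunF(\FunG(h)) \cp \epsilon_Y \]
commutes. I would therefore take $f : X \to Y$ and $g : Y \to Z$ together with the two naturality hypotheses, and show that the analogous square for $f \cp g : X \to Z$ commutes.

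Concretely I would carry out a short pasting calculation: starting from $\epsilon_X \cp (f \cp g)$, reassociate and apply the naturality hypothesis for $f$, then the one for $g$, obtaining
\begin{align*}
  \epsilon_X \cp (f \cp g)
    &= (\epsilon_X \cp f) \cp g \\
    &= (\FunF(\FunG(f)) \cp \epsilon_Y) \cp g \\
    &= \FunF(\FunG(f)) \cp (\epsilon_Y \cp g) \\
    &= \FunF(\FunG(f)) \cp (\FunF(\FunG(g)) \cp \epsilon_Z) \\
    &= (\FunF(\FunG(f)) \cp \FunF(\FunG(g))) \cp \epsilon_Z.
\end{align*}
The remaining step is to recognise the parenthesised term as $\FunF(\FunG(f \cp g))$. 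This is where functoriality enters twice: $\FunG(f \cp g) = \FunG(f) \cp \FunG(g)$ holds by definition of $\FunG$, which is defined freely on composition (Definition \ref{definition:fung}), and $\FunF$ preserves composition since it is a functor (Proposition \ref{proposition:funf-is-monoidal}). Hence $\FunF(\FunG(f)) \cp \FunF(\FunG(g)) = \FunF(\FunG(f \cp g))$, and the chain above collapses to
\[ \epsilon_X \cp (f \cp g) = \FunF(\FunG(f \cp g)) \cp \epsilon_Z, \]
which is exactly the naturality square for $f \cp g$.

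I do not expect any genuine obstacle: this is the standard observation that two naturality squares paste horizontally into a naturality square, and the proposition merely isolates this closure property. Its role, I anticipate, is to let naturality of $\epsilon$ be established by structural induction on morphisms of $\CatD$ — treating the generators and `slices' of the sequential normal form as base cases and then extending to arbitrary composites via this proposition — so that one never has to verify naturality directly for a general composite. The only point demanding any care is keeping track of composition order under the diagrammatic $\cp$ convention when reassociating.
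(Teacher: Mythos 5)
Your proof is correct and follows essentially the same route as the paper's: both paste the two naturality squares horizontally, using that $\FunG(f \cp g) = \FunG(f) \cp \FunG(g)$ by definition and that $\FunF$ preserves composition. The only cosmetic difference is that the paper phrases the hypotheses as $\FunF(\FunG(f)) = \epsilon_X \cp f \cp \epsilon_Y^{-1}$ and conjugates by the inverses, whereas you paste the squares directly without ever invoking invertibility of the components.
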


\begin{proof}
  Take morphisms $f : X \to Y$ and $g : Y \to Z$.
  By assumption, we have:
  \[
    \FunF(\FunG(f)) = \epsilon_X \cp f \cp \epsilon_Y^{-1}
    \qquad \qquad
    \FunF(\FunG(g)) = \epsilon_Y \cp g \cp \epsilon_Z^{-1}
  \]
  from which we can derive
  \begin{equation}
    \begin{aligned}
      \epsilon_X^{-1} \cp \FunF(\FunG(fg)) \cp \epsilon_Z
          = \epsilon_X^{-1} \cp \FunF(\FunG(f) \cp \FunG(g)) \cp \epsilon_Z
        & = \epsilon_X^{-1} \cp \FunF(\FunG(f)) \cp \FunF(\FunG(g)) \cp \epsilon_Z \\
        & = \epsilon_X^{-1} \cp \epsilon_X \cp f \cp \epsilon_Y^{-1} \cp \epsilon_Y \cp g \cp \epsilon_Z^{-1} \cp \epsilon_Z \\
        & = f \cp g \\
    \end{aligned}
  \end{equation}
  as required.
\end{proof}

\begin{proposition}
  \label{proposition:g-then-f-equivalence}
  \label{proposition:epsilon-natural}
  $\epsilon : \FunF \circ \FunG \rightarrow \id_{\CatD}$ is a monoidal natural isomorphism.
\end{proposition}

\begin{proof}
  We begin by showing naturality inductively, having already proven the
  inductive step for composition in Proposition
  \ref{proposition:epsilon-commutes-composition}.
  We again use Proposition \ref{proposition:sequential-normal-form}--that each
  morphism $f$ in $\CatD$ can be decomposed into `slices'
  $ f = t_1 \cp \ldots \cp t_n $
  with each $t_i$ of the form $\id_X \TensorD g_i \TensorD \id_Y$, with each
  $g_i : A \to B$ a generator.
  It thus suffices to prove that
  $t = \epsilon^{-1}_{X \TensorD A \TensorD Y} \cp \FunF(\FunG(t)) \cp \epsilon_{X \TensorD B \TensorD Y}$
  for an arbitrary `slice' $t$.
  One can check this by a second induction whose base case and inductive step correspond
  to the definition of $\FunG$ (Definition \ref{definition:fung}).
  To be precise, one can check this property graphically for each base case
  $\FunG(\id_{\ObjD{\unitC}}) \ldots \FunG(\id_{\ObjD{A}})$,
  and additionally for the inductive step $\FunG(\id_{\ObjD{A}} \TensorD q \TensorD r)$.
  Finally, note that $\epsilon$ is indeed a \emph{monoidal} natural
  transformation, which can be verified by another straightforward induction.
\end{proof}

\begin{theorem}[Mac Lane's Strictness Theorem]
  \label{theorem:strictness}
  For any monoidal category $\CatC$ there is a monoidally equivalent strict category.
\end{theorem}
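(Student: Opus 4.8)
The plan is to assemble the theorem directly from the machinery already established, taking $\CatD$ (Definition \ref{definition:catd}) as the required strict category. Since $\CatD$ is strict monoidal by construction, the only thing left to check is that $\CatC$ and $\CatD$ are monoidally equivalent. First I would invoke Proposition \ref{proposition:funf-is-monoidal} and Proposition \ref{proposition:fung-is-monoidal} to obtain the two monoidal functors $\FunF : \CatC \to \CatD$ and $\FunG : \CatD \to \CatC$, which supply both directions of the equivalence together with the coherence data ($\Adapter, \adapter$ for $\FunF$, and $\Psi, \psi$ for $\FunG$) witnessing that each is genuinely monoidal rather than merely a functor of underlying categories.

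Next I would exhibit the two natural isomorphisms required by the definition of an equivalence. For $\eta : \id_{\CatC} \to \FunG \circ \FunF$, Proposition \ref{proposition:f-then-g-is-identity} shows the composite is in fact \emph{equal} to the identity functor, so I may simply take $\eta = \id$. For $\epsilon : \FunF \circ \FunG \to \id_{\CatD}$, Proposition \ref{proposition:g-then-f-equivalence} supplies the required natural isomorphism, constructed inductively in Definition \ref{definition:epsilon}. Together these data constitute an equivalence of categories, and since both $\FunF$ and $\FunG$ are monoidal, it is a \emph{monoidal} equivalence.

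Finally I would observe that $\CatC$ was fixed as an \emph{arbitrary} monoidal category at the outset of Section \ref{section:graphical-language}. Hence the construction produces, for every monoidal category, a monoidally equivalent strict one, which is precisely the statement of the strictness theorem.

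The genuine obstacle does not lie in this final step, which is essentially bookkeeping: all the difficulty has been front-loaded into the earlier propositions. If anything deserves emphasis, it is that the equivalence must be established as monoidal and not merely as an equivalence of the underlying categories — this is exactly why the preceding work verified that $\FunF$ and $\FunG$ are monoidal functors equipped with coherence maps, and why it was necessary to check the monoidal hexagon for $\FunG$ and the naturality of $\epsilon$. Those verifications, rather than the assembly here, are where the real content of the proof resides.
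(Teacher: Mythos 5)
Your proof is correct and takes essentially the same route as the paper: the paper's own proof likewise assembles the theorem by citing Propositions \ref{proposition:funf-is-monoidal} and \ref{proposition:fung-is-monoidal} for the monoidal functors $\FunF$ and $\FunG$, Propositions \ref{proposition:f-then-g-is-identity} and \ref{proposition:g-then-f-equivalence} for the equivalence, and concluding since $\CatC$ was arbitrary. Your additional remark that the substance lies in the earlier propositions rather than in this assembly is accurate.
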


\begin{proof}
  $\FunF$ and $\FunG$ are monoidal functors by Propositions
  \ref{proposition:funf-is-monoidal} and \ref{proposition:fung-is-monoidal}, and
  they form a monoidal equivalence with $\eta$ and $\epsilon$ by Propositions
  \ref{proposition:f-then-g-is-identity} and
  \ref{proposition:g-then-f-equivalence}.
  Since $\CatC$ was arbitrary, the proof is complete.
\end{proof}

Note that in contrast to Mac Lane's proof of Theorem \ref{theorem:strictness},
we make no reference to the coherence theorem.
We can therefore make use of the strictness theorem to prove coherence, which is
the subject Section \ref{section:coherence}.

\section{Symmetric Monoidal Strictness}
\label{section:symmetric-monoidal-strictness}
We now show how the strictness theorem extends to the \emph{symmetric} monoidal
case, beginning with how the braiding $\twist$ of $\CatC$ extends to $\CatD$.

\begin{definition}[Braiding of $\CatD$]
  \label{definition:catd-braiding}
  Let $\CatC$ be a symmetric monoidal category.
  The \deftext{braiding} $\twist_{X, Y}$ in $\CatD$ is defined as 
  \[
    \twist_{X, Y} \defeq \qquad \tikzfig{sc/s-twist-definition}
  \]
  where $\epsilon$ is given in Definition \ref{definition:epsilon}.
\end{definition}


\begin{proposition}[Naturality of braiding]
  \label{proposition:braiding-naturality}
  The braiding in Definition \ref{definition:catd-braiding} is natural.
\end{proposition}
\begin{proof}
  We have that $\epsilon$ is natural by Proposition \ref{proposition:epsilon-natural}.
  The result then follows by applying naturality of $\epsilon$, adapters, and
  $\MorD{\twist}$.
\end{proof}

\begin{example}
  When $X = \ObjD{A}$ and $Y = \ObjD{B}$ naturality follows from naturality of
  adapters and the braiding in $\CatC$.
  \begin{align*}
    \tikzfig{sc/equation-twist-natural-1}
      & \quad = \quad \tikzfig{sc/equation-twist-natural-2} \\
      & \quad = \quad \tikzfig{sc/equation-twist-natural-3} \\
      & \quad = \quad \tikzfig{sc/equation-twist-natural-4} \\
  \end{align*}
\end{example}

This braiding makes $\CatD$ symmetric monoidal.

\begin{proposition}
  \label{proposition:catd-symmetric-monoidal}
  If $\CatC$ is symmetric monoidal then $\CatD$ is symmetric monoidal.
\end{proposition}
\begin{proof}
  Let the braiding of $\CatD$ be defined as in \ref{definition:catd-braiding}.
  It is natural by Proposition \ref{proposition:braiding-naturality},
  and $\twist_{X, Y} \cp \twist_{Y, X} = \id_{X \TensorD Y}$
  because adapters, $\epsilon$, and $\MorD{\twist}$ are all isomorphisms.
  Finally, we must show that the unitor coherence and associator coherence axioms
  of symmetric monoidal categories are satisfied.
  The unit coherence follows straightforwardly.
  Calculating for $\twist_{X, \unitD}$ we have
  \begin{align*}
    \twist_{X, \unitD} & = \qquad \tikzfig{sc/s-twist-xunit} \\ \\
                       & = \qquad \tikzfig{sc/equation-twist-xunit-1} \\ \\
                       & = \qquad \tikzfig{sc/equation-twist-xunit-2}
  \end{align*}
  where the final step follows from the unitor coherence in $\CatC$.
  The case of $\twist_{X, \unitD}$ holds in essentially the same way.

  By similar calculations one may show that the associator coherence holds.
  Essentially, the proof follows by naturality and the associator coherence of
  $\CatC$.
\end{proof}

In addition, the monoidal functor $\FunS$ extends to a \emph{symmetric} monoidal functor.

\begin{proposition}
  \label{proposition:funf-symmetric-monoidal}
  If $\CatC$ is symmetric monoidal, then $\FunS : \CatC \to \CatD$ is a
  symmetric monoidal functor.
\end{proposition}
\begin{proof}
  For $\FunS$ to be symmetric monoidal, we require that
  $\twist_{\FunF(A), \FunF(B)} = \bigadapter \cp \FunS(\twist_{A, B}) \cp \bigadapterInv$.
  This is immediate if we simply apply $\FunS$, yielding the equality
  \[ \twist_{\FunF(A), \FunF(B)} \qquad = \qquad \twist_{\ObjD{A}, \ObjD{B}} \qquad = \qquad \tikzfig{sc/strictified-twist} \]
  which holds by Definition \ref{definition:catd-braiding}.
\end{proof}

Finally, for $\CatC$ and $\CatD$ to be symmetric monoidally equivalent,
we must also have that $\FunG$ is a symmetric monoidal functor.

\begin{proposition}
  \label{proposition:fung-symmetric-monoidal}
  If $\CatC$ is symmetric monoidal, then $\FunN : \CatD \to \CatC$ is a
  symmetric monoidal functor.
\end{proposition}
\begin{proof}
  One can check by straightforward induction that
  $\FunN(\epsilon_X \TensorD \epsilon_Y) = \Psi_{X, Y}$.
  Having done so, the result is immediate:
  \[
    \FunN\left(\tikzfig{sc/s-twist-definition}\right) = \Psi^{-1}_{X, Y} \cp \twist \cp \Psi_{Y, X}
  \]
  as required.
\end{proof}

\section{Coherence}
\label{section:coherence}
\topicsentence{
  We can now give an elementary proof of Mac Lane's \emph{coherence theorem}.
}
In~\cite{CFWM}, Mac Lane gives his theorem in two parts:
Theorem 1~\cite[p.~166]{CFWM} and its corollary~\cite[p.~169]{CFWM}.
The `meat' of the proof is in the former part, corresponding to our Section
\ref{section:preorder-proof}.
A graphical exposition of Mac Lane's proof of the corollary is given in
Section \ref{section:corollary-proof}.

\topicsentence{
  Mac Lane begins by defining a certain preorder $\cat{W}$, which he then shows
  enjoys the following property:
}

\begin{theorem} (Mac Lane's Coherence Theorem~\cite[p.~166]{CFWM}) \\
  \label{theorem:coherence}
  Let $\cat{M}$ be an arbitrary monoidal category, and let $M$ be an object of $\cat{M}$.
  Then there is a unique strict monoidal functor $\CatW \to \cat{M}$ such that
  $W \mapsto M$.
\end{theorem}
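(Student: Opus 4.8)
The plan is to use that $\CatW$ is the free monoidal category on the single object generator $W$: its objects are formal binary trees whose leaves are labelled by $W$ or $\unit$, and its morphisms are generated by the structural isomorphisms $\assoc,\unitl,\unitr$ (and their inverses), closed under $\TensorW$ and $\cp$, subject precisely to the monoidal coherence axioms. A strict monoidal functor $F:\CatW\to\CatM$ with $W\mapsto M$ is then forced on objects: strictness gives $F(\unit)=\unitM$ and $F(X\TensorW Y)=F(X)\TensorM F(Y)$, so that $F$ sends each tree to the corresponding parenthesised tensor of copies of $M$ and $\unitM$ in $\CatM$. On the generating morphisms I would put $F(\assoc^{\CatW})=\assoc^{\CatM}$, $F(\unitl^{\CatW})=\unitl^{\CatM}$, $F(\unitr^{\CatW})=\unitr^{\CatM}$ (and likewise for inverses), extending by $F(u\TensorW v)=F(u)\TensorM F(v)$ and $F(u\cp v)=F(u)\cp F(v)$.

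For existence I must check that $F$ respects the defining relations of $\CatW$. These are exactly the naturality squares of $\assoc,\unitl,\unitr$ together with the pentagon and triangle; under $F$ each becomes the corresponding equation in $\CatM$, which holds because $\CatM$ is itself a monoidal category. Hence $F$ is a well-defined functor, strict monoidal by construction. Uniqueness is then formal: any strict monoidal $F'$ sending $W\mapsto M$ agrees with $F$ on objects, and strictness of the functor forces it to send $\assoc^{\CatW},\unitl^{\CatW},\unitr^{\CatW}$ to the structural maps of $\CatM$; since these generate every morphism of $\CatW$ under $\TensorW$ and $\cp$, we conclude $F'=F$.

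The real content — and the step I expect to be the main obstacle — is showing that $\CatW$ is a \emph{preorder}: that between any two objects there is at most one morphism. This is what I would prove graphically using the strictification of Section~\ref{section:strictness}. Passing to the strictification $\CatSW$, where string diagrams are available, the functor $\FunF$ sends each of $\assoc,\unitl,\unitr$ to a composite of the adapter generators $\Adapter,\AdapterInv,\adapter,\adapterInv$, so an arbitrary canonical morphism $X\to Y$ becomes a diagram built solely from bundlers, unbundlers, and unit hiders and revealers. I would then use the adapter equations~\eqref{equation:catd-adapter-equations} to cancel every $\Adapter\cp\AdapterInv$ and $\adapter\cp\adapterInv$ pair — exactly the reduction illustrated for $b_A$ in Section~\ref{section:exnonstrict} — reducing each such diagram to a normal form that depends only on the underlying list $W^{\TensorW n}$ of leaves. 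If this rewriting is confluent and terminating, any two parallel canonical morphisms reduce to the same normal form and are therefore equal; transporting back along the monoidal equivalence $\CatW\simeq\CatSW$ then shows $\CatW$ is a preorder.

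With the preorder property in hand, the interpretation of morphisms under $F$ is unambiguous (there is at most one morphism to interpret), and the subsequent corollary — that every diagram of canonical maps commutes in an arbitrary $\CatM$ — follows by lifting such a diagram to $\CatW$ and applying $F$. The hardest point is the termination and confluence of the graphical reduction, i.e. confirming that the adapter equations alone suffice to bring every bundler/unbundler diagram to a canonical form; this is precisely where the explicit generators-and-relations presentation of $\CatD$ does the real work.
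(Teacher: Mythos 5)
Your skeleton agrees with the paper's: freeness of $\CatW$ gives existence of the strict monoidal functor, strictness forces its values and hence uniqueness, and the genuine content is showing $\CatW$ is a preorder, attacked via the strictification $\CatSW$ and transported back along $\FunG \circ \FunF = \id_{\CatW}$ (Proposition \ref{proposition:if-catd-preorder-then-catc-preorder}). Your first half is fine, and in fact slightly more careful about existence than the paper's Proposition \ref{proposition:catw-unique-strict-monoidal-functor}, which argues only uniqueness. The gap is in the second half: you reduce the preorder property of $\CatSW$ to termination and confluence of a rewriting system that cancels adjacent $\Adapter \cp \AdapterInv$ and $\adapter \cp \adapterInv$ pairs, and you leave exactly that claim unproven while conceding it is the main obstacle. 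This is not a deferrable technicality. Cancellation of inverse pairs alone can never yield a preorder (a free groupoid quotiented only by cancellation is not a preorder --- consider the free group on one generator), so your rewriting must also build in the naturality equations of \eqref{equation:catd-adapter-equations}, the definitional equations \eqref{equation:catd-monoidal-equations} for $\MorD{\assoc}, \MorD{\unitl}, \MorD{\unitr}$, and the interchange law of the strict tensor; the statement that this system is confluent with normal forms depending only on the leaf list $\ObjD{W}^{\TensorD n}$ is equivalent to the coherence theorem you are trying to prove. Assuming it makes the argument circular at its core.

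The paper closes this gap without any appeal to rewriting theory. Having shown that $\CatSW$ is generated by adapters (Proposition \ref{proposition:catsw-generated-by-adapters}) and is a groupoid, it defines an explicit normal form rather than proving one exists: $\objpack(A)$ by induction on the object $A$, $\objunpack(A) \defeq \objpack(A)^{-1}$, and the canonical arrow $\canonical(A,B) \defeq \objunpack(A) \cp \objpack(B)$ between any two objects of equal $\objsize$. Proposition \ref{proposition:all-morphisms-canonical} then proves $f = \canonical(\dom(f), \cod(f))$ for \emph{every} morphism by structural induction on syntax: generators are checked graphically, composition is handled by the single groupoid cancellation $\objpack(Y) \cp \objpack(Y)^{-1} = \id$ against the middle object (no confluence analysis needed), and the tensor case is a direct graphical computation. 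If you want to repair your proof, replace the confluence conjecture with this canonical-form induction; the rest of your argument can stand as written.
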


In contrast, we will define $\CatW$ so this unique functor is easy to construct,
and then use $\CatSW$ to give a \emph{graphical proof} that $\CatW$ is a
preorder.
Note that the monoidal functor in question is \emph{strict}, so its coherence
maps are identities.

\subsection{The free monoidal category on one generator}
\label{section:preorder-proof}
We begin by defining $\CatW$. Again, recall that our definition differs from Mac
Lane; we will later show that this definition indeed yields a preorder in order
to guarantee that we indeed prove the same theorem.

\begin{definition} 
  \label{definition:catw}
  We define $\CatW$ as the monoidal category freely generated by a single object
  $W$ and no morphisms except those required by the definition of a monoidal
  category.
  \footnote{Mac Lane denotes the generating object as $(-)$ to suggest an ``empty place''.
  We follow Peter Hines' convention~\cite{hines2015coherence} and use $W$ instead.}
\end{definition}

\begin{remark}
  The objects of $\CatW$ are $\unit_{\CatW}$, $W$, and their tensor products.
  The arrows are $\id, \rho, \lambda, \alpha$ and their composites and tensor products.
\end{remark}

\topicsentence{It is now clear that the statement of Mac Lane's Theorem 1 holds for our definition of \nolinebreak $\CatW$}:

\begin{proposition}
  \label{proposition:catw-unique-strict-monoidal-functor}
  Given an arbitrary monoidal category $\cat{M}$ and object $M \in \cat{M}$,
  there is a unique strict monoidal functor $\CatW \to \cat{M}$ with $W \mapsto M$.
\end{proposition}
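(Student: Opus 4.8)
The plan is to read off both existence and uniqueness directly from the fact that $\CatW$ is \emph{freely} generated (Definition~\ref{definition:catw}): the statement is precisely the universal property of the free monoidal category on one object, and having defined $\CatW$ syntactically, this property is almost immediate. I would treat uniqueness and existence separately, and in neither case expect a genuine obstacle.

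For uniqueness, I would argue that a strict monoidal functor $F : \CatW \to \cat{M}$ with $W \mapsto M$ is forced on all data. Strictness means the coherence maps of Definition~\ref{definition:monoidal-functor} are identities, so the hexagon and the unitor squares collapse to the requirements $F(\alpha) = \alpha$, $F(\lambda) = \lambda$, and $F(\rho) = \rho$; that is, $F$ must send each structural isomorphism of $\CatW$ to the corresponding one of $\cat{M}$. On objects, strict preservation of $\otimes$ and of the unit, together with $W \mapsto M$, determines $F$ by recursion on the tensor structure, replacing each $W$ by $M$ and $\unitW$ by $\unitM$. On morphisms, the Remark following Definition~\ref{definition:catw} records that every arrow of $\CatW$ is built from $\id, \alpha, \lambda, \rho$ (and their inverses) by composition and tensor; since $F$ is forced on each of these and must preserve composition and tensor, it is forced everywhere. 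Hence at most one such $F$ exists.

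For existence, I would define $F$ by exactly this recipe and check it is a well-defined strict monoidal functor. Functoriality ($F(\id) = \id$ and $F(f \cp g) = F(f) \cp F(g)$) and strict monoidality hold by construction, so the only content is well-definedness: $F$ must respect the equations imposed in $\CatW$. This is where freeness does the work. Since $\CatW$ is generated subject to no relations other than those required of a monoidal category---naturality and invertibility of $\alpha, \lambda, \rho$, the pentagon, and the triangle---and since $\cat{M}$ satisfies all of these simply by being a monoidal category, $F$ carries each imposed equation of $\CatW$ to a valid equation of $\cat{M}$. Therefore the assignment descends to a genuine functor, and existence follows.

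The main difficulty is essentially absent here, and deliberately so: by defining $\CatW$ as a free construction rather than presenting it concretely (as Mac Lane does), the verification of the universal property reduces to the observation that $\cat{M}$ models the monoidal axioms. The real work of the coherence theorem is thereby postponed to showing that this freely generated $\CatW$ is in fact a preorder, which is the subject of Section~\ref{section:preorder-proof}.
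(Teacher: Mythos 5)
Your uniqueness argument is essentially the paper's own proof: the paper likewise observes that strictness forces $\FunU$ on the unit, on tensor products of objects, on each of $\{\assoc, \unitl, \unitr, \id\}$, and on tensors and composites of these, then concludes that this accounts for every object and morphism of $\CatW$. (Your unpacking of \emph{why} strictness forces $F(\assoc) = \assoc$ etc.---the hexagon and unitor squares of Definition~\ref{definition:monoidal-functor} collapsing when the coherence maps are identities---is the justification the paper compresses into the phrase ``because $\FunU$ is strict''.) Where you go beyond the paper is the existence half: the paper's proof stops at uniqueness, leaving existence implicit in the freeness of Definition~\ref{definition:catw}, whereas you make the well-definedness check explicit---the only relations imposed on $\CatW$ are the monoidal category axioms (naturality and invertibility of the structural maps, pentagon, triangle, interchange), all of which hold in $\cat{M}$ simply because it is a monoidal category, so the forced assignment descends to a genuine strict monoidal functor. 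Since the proposition claims both existence and uniqueness, your version is the more complete argument; the paper's relies on the reader recognizing the universal property of a free construction. Both proofs agree on the point you flag at the end: none of the real content of coherence lives here, since it is deferred to showing that $\CatW$ is a preorder in Section~\ref{section:preorder-proof}.
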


\begin{proof}
  Suppose $\FunU : \CatW \to \cat{M}$ is such a (strict) monoidal functor.
  Then we must have that $\FunU(W) = M$ by assumption, and
  \[
    \FunU(\unit) = \unit
    \quad
    \FunU(A \otimes M) = \FunU(A) \otimes \FunU(M)
    \quad
    \FunU(f) = f, f \in \{\assoc, \unitl, \unitr, \id \}
    \quad
    \FunU(f \otimes g) = \FunU(f) \otimes \FunU(g)
  \]
  because $\FunU$ is strict.
  But this accounts for all objects and morphisms of $\CatW$, and so $\FunU$ must be unique.
\end{proof}


\topicsentence{
  However, to constitute a proof of the coherence theorem we must now \emph{prove} that
  $\CatW$ is a preorder.
}
Our argument proceeds in three main steps. We will show the following:

\begin{enumerate}
  \item For any monoidal category $\CatC$, If $\CatD$ is a preorder, then so is $\CatC$
  \item $\CatSW$ is generated solely by adapters $\{ \bigadapter, \adapter \}$ and their inverses.
  \item $\CatSW$ is a preorder (which we prove graphically)
\end{enumerate}

The first two steps are straightforward; we address them now. The third requires
more work, and is contained in Section \ref{section:catsw-is-a-preorder}.

\begin{proposition} If \CatD is a preorder, then so is $\CatC$.
  \label{proposition:if-catd-preorder-then-catc-preorder}
\end{proposition}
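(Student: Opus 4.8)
The plan is to exploit the fact that the strictification functor $\FunF : \CatC \to \CatD$ is faithful, which lets us reflect thinness from $\CatD$ back to $\CatC$. Recall that a category being a \emph{preorder} means it is thin: there is at most one morphism between any ordered pair of objects. So the goal is to show that any two parallel arrows of $\CatC$ coincide, given that this already holds in $\CatD$.

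First I would observe that $\FunF$ is faithful, and crucially this requires no new work: it is immediate from Proposition \ref{proposition:f-then-g-is-identity}, which establishes $\FunG \circ \FunF = \id_{\CatC}$. Indeed, if $\FunF(f) = \FunF(g)$ for a pair of parallel arrows $f, g : A \to B$ in $\CatC$, then applying $\FunG$ and using that identity gives $f = \FunG(\FunF(f)) = \FunG(\FunF(g)) = g$.

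Next, assuming $\CatD$ is a preorder, I would take arbitrary parallel morphisms $f, g : A \to B$ in $\CatC$ and consider their images $\FunF(f), \FunF(g) : \ObjD{A} \to \ObjD{B}$, which are again parallel in $\CatD$ since $\FunF(A) = \ObjD{A}$ and $\FunF(B) = \ObjD{B}$. Thinness of $\CatD$ then forces $\FunF(f) = \FunF(g)$, and faithfulness of $\FunF$ yields $f = g$. As $A, B, f, g$ were arbitrary, $\CatC$ admits at most one morphism between any two objects, i.e. it is a preorder.

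I do not expect a genuine obstacle here: the argument is purely formal and turns entirely on faithfulness of $\FunF$, which the previously established equation $\FunG \circ \FunF = \id_{\CatC}$ supplies for free. The only point worth stating carefully is the definition of preorder as a thin category, so that the single faithfulness step is seen to close the claim completely.
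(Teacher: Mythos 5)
Your proof is correct and is essentially the same argument as the paper's: both use the identity $\FunG \circ \FunF = \id_{\CatC}$ from Proposition \ref{proposition:f-then-g-is-identity} together with $\FunF(f) = \FunF(g)$ (from $\CatD$ being a preorder) to conclude $f = \FunG(\FunF(f)) = \FunG(\FunF(g)) = g$. Your phrasing via faithfulness of $\FunF$ is just a repackaging of that same computation.
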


\begin{proof}
  Let $f, g : \CatC(A, B)$.
  Recall that $\FunG \circ \FunF = \id$, and so we can derive
  $ f = \FunG(\FunF(f)) = \FunG(\FunF(g)) = g $
  where we used that $\FunF(f) = \FunF(g)$ because $\CatD$ is a preorder.
\end{proof}

%
%

Another lemma shows we can reason about $\CatSW$ by considering only adapters:

\begin{proposition} $\CatSW$ is generated by $\bigadapter, \adapter$ and their inverses.
  \label{proposition:catsw-generated-by-adapters}
\end{proposition}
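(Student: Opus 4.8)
The plan is to show that each morphism generator $\MorD{f}$ of $\CatSW$ is redundant: it is equal to a morphism built entirely from the adapters $\Adapter, \adapter$ and their inverses. Recalling that $\CatSW$ is the strictification (Definition \ref{definition:catd}) of $\CatW$, its morphism generators are precisely the four adapters together with one generator $\MorD{f}$ for every morphism $f$ of $\CatW$. By the Remark following Definition \ref{definition:catw}, every such $f$ is a composite and tensor of the structural isomorphisms $\id, \assoc, \unitl, \unitr$ and their inverses. It therefore suffices to eliminate the generators $\MorD{f}$ by structural induction on this presentation of $f$.

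First I would treat the base cases, where $f$ is a single structural morphism. For $f = \id$, the functoriality equations \eqref{equation:catd-functor-equations} give $\MorD{\id} = \id$, which uses no generators at all. For $f$ one of $\assoc, \assocInv, \unitl, \unitlInv, \unitr, \unitrInv$, the associator/unitor equations \eqref{equation:catd-monoidal-equations} \emph{define} $\MorD{f}$ as a diagram of adapters; this is really the heart of the matter, since these equations were added precisely to express the images of the structural isomorphisms in terms of packing, unpacking, and (un)hiding the unit.

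For the inductive steps, composition is immediate: $\MorD{f \cp g} = \MorD{f} \cp \MorD{g}$ by functoriality, and both factors are adapter expressions by the induction hypothesis. The tensor case is handled by naturality of $\Adapter$: since $\FunF$ is a monoidal functor (Proposition \ref{proposition:funf-is-monoidal}), the adapter naturality equations \eqref{equation:catd-adapter-equations} yield $\MorD{f \otimes g} = \AdapterInv \cp (\MorD{f} \TensorD \MorD{g}) \cp \Adapter$, whose inner factors are again adapter expressions by induction. As $\CatSW$ is freely generated by the adapters and the $\MorD{f}$, and every $\MorD{f}$ has now been rewritten using adapters alone, the morphisms $\{\Adapter, \adapter\}$ and their inverses generate all of $\CatSW$.

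I expect the main obstacle to be the bookkeeping in the tensor case: one must check that the naturality equation applies to an arbitrary tensor $f \otimes g$ rather than merely to a whiskered single generator, and confirm that iterating the three rewrite rules terminates. Termination holds because each rewrite strictly reduces the structural complexity of the underlying morphism of $\CatW$, so the induction is well founded.
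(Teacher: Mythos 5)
Your proposal is correct and takes essentially the same route as the paper: both reduce the problem to expressing each generator $\MorD{f}$, $f \in \CatW$, in terms of adapters, dispatch the structural isomorphisms via the associator/unitor equations \eqref{equation:catd-monoidal-equations}, and then induct on composites and tensors using functoriality together with the adapter naturality equations \eqref{equation:catd-adapter-equations}, the paper writing the tensor step as $\FunF(f \otimes g) = \Adapter \cp (\FunF(f) \TensorD \FunF(g)) \cp \AdapterInv$. Your added remarks on identities and on well-foundedness of the rewriting are fine but not needed beyond what the structural induction already provides.
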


\begin{proof}
  Arrows of $\CatSW$ are by definition either adapters $\bigadapter, \adapter$, their inverses,
  or morphisms $\MorD{f}$ for some $f \in \CatW$.
  But note that all such $f \in \CatW$ are either $\id, \rho, \lambda, \alpha$ or their composites.
  It is clear that each of $\MorD{\unitl}, \MorD{\unitr}, \MorD{\assoc}$ can each be written as
  adapters by equations \eqref{equation:catd-monoidal-equations}, so it remains
  to show that composites of such morphisms can also be written this way.

  That is, we must show that $\FunF(f \otimes g)$ can be expressed using only
  adapters and their composites.
  This can be proved inductively: if $\FunF(f), \FunF(g)$ can be expressed using adapters,
  then so too can compositions $\FunF(f \cp g) = \FunF(f) \cp \FunF(g)$
  and tensors $\FunF(f \otimes g) = \bigadapter \cp (\FunF(f) \TensorD \FunF(g)) \cp \bigadapterInv$.

  Thus every morphism of $\CatSW$ can be expressed in terms of adapters,
  and so the category can be said to be \emph{generated} by (only) adapters.
\end{proof}

\subsection{Graphical proof that \texorpdfstring{$\CatSW$}{W} is a preorder}
\label{section:catsw-is-a-preorder}
\topicsentence{
  We can now prove graphically that $\CatSW$ is a preorder using a normal form argument.
} Our approach is as follows:

\begin{enumerate}
  \item Define for each object a $\objsize$ in $\Nat$ (Definition \ref{definition:objsize})
  \item Prove all morphisms in $\CatSW$ go between objects of the same size (Proposition \ref{proposition:catsw-morphisms-preserve-size})
  \item Define a canonical arrow $\canonical(A, B)$ between any two objects of the same size (Definition \ref{definition:canonical-arrows})
  \item Show that any arrow is equal to the canonical one (Proposition \ref{proposition:all-morphisms-canonical})
\end{enumerate}

Note that we make heavy use of Proposition
\ref{proposition:catsw-generated-by-adapters}, which lets us reason about
$\CatSW$ inductively in terms of adapters and their tensors and composites.

We begin--following Mac Lane--by defining the \emph{size} of an object
(the same as Mac Lane's notion of \emph{length}~\cite[p.~165]{CFWM})
as follows:

\begin{definition} 
  \label{definition:objsize}
  We define the $\objsize$ of an object as the number of occurrences of $W$, defined inductively:
  \begin{gather*}
    \objsize(\unitSW)             \defeq 0
    \qquad
    \objsize(\ObjD{\unitW})       \defeq 0
    \qquad
    \objsize(\ObjD{W})            \defeq 1
    \\
    \objsize(\ObjD{A \TensorC B}) \defeq \objsize(A) + \objsize(B)
    \qquad
    \objsize(X \TensorD Y)        \defeq \objsize(X) + \objsize(Y)
  \end{gather*}
\end{definition}

\begin{proposition} $\CatSW$ morphisms preserve $\objsize$: \label{proposition:catsw-morphisms-preserve-size} \\
  If $f : A \to B$ is a morphism in $\CatSW$, then $\objsize(A) = \objsize(B)$.
\end{proposition}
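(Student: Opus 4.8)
The plan is to prove this by a simple structural induction, exploiting Proposition~\ref{proposition:catsw-generated-by-adapters}. That result tells us that every morphism of $\CatSW$ is built from the adapters $\Adapter, \adapter$ and their inverses $\AdapterInv, \adapterInv$ using only composition ($\cp$) and tensor ($\TensorD$); in particular we need not treat the morphisms $\MorD{f}$ coming from $\CatW$ separately, since the unitors and associator are already expressed via adapters by equations~\eqref{equation:catd-monoidal-equations}. It therefore suffices to check that each adapter generator relates objects of equal $\objsize$, and that preservation of $\objsize$ is closed under composition and tensor.

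First I would record a small lemma: for any object $A$ of $\CatW$, $\objsize(\ObjD{A})$ equals the number of occurrences of $W$ in $A$. This is immediate by induction on the structure of $A$ using the clauses $\objsize(\ObjD{\unitW}) = 0$, $\objsize(\ObjD{W}) = 1$, and $\objsize(\ObjD{A \TensorC B}) = \objsize(A) + \objsize(B)$ of Definition~\ref{definition:objsize}. With this in hand the base cases are direct computations. The adapter $\Adapter_{A,B} : \ObjD{A} \TensorD \ObjD{B} \to \ObjD{A \TensorC B}$ has a domain of size $\objsize(\ObjD{A}) + \objsize(\ObjD{B})$ and a codomain of size $\objsize(A) + \objsize(B)$, and these agree by the lemma; its inverse $\AdapterInv$ likewise preserves size since it merely reverses the arrow. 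The unit adapter $\adapter : \unitSW \to \ObjD{\unitW}$ connects two objects of size $0$, and $\adapterInv$ reverses it; identities preserve $\objsize$ trivially.

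For the inductive cases, closure follows immediately from the defining clauses of $\objsize$. If $f : A \to B$ and $g : B \to C$ each preserve size, then $\objsize(A) = \objsize(B) = \objsize(C)$, so $f \cp g$ does as well. If $f : A \to B$ and $g : C \to D$ each preserve size, then using $\objsize(X \TensorD Y) = \objsize(X) + \objsize(Y)$ the domain $A \TensorD C$ and codomain $B \TensorD D$ have sizes $\objsize(A) + \objsize(C)$ and $\objsize(B) + \objsize(D)$ respectively, which coincide. This exhausts all the generators and constructors supplied by Proposition~\ref{proposition:catsw-generated-by-adapters}.

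I do not expect any genuine obstacle here: the entire content is the additivity of $\objsize$ over $\TensorD$ together with the observation that the adapters relate objects built from the same number of copies of $W$. The only point requiring (minor) care is the base case for $\Adapter$, where one must confirm that repackaging $\ObjD{A} \TensorD \ObjD{B}$ as $\ObjD{A \TensorC B}$ leaves the count of $W$'s unchanged---which is exactly the small lemma recorded above.
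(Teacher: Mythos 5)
Your proof is correct and follows exactly the route the paper intends: its own proof is the single line ``Induction on morphisms,'' and your argument is just that induction carried out in full, using Proposition~\ref{proposition:catsw-generated-by-adapters} to restrict attention to adapters, identities, composites and tensors --- precisely the inductive framing the paper itself endorses just before this proposition. The base cases and the closure under $\cp$ and $\TensorD$ are checked correctly, so there is nothing to add.
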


\begin{proof} Induction on morphisms.
\end{proof}

\topicsentence{
  We will define the canonical arrow $\canonical(A, B)$ in two halves, $\objpack$ and $\objunpack$.
}
To do so, we will first need some additional definitions.

\begin{definition} 
  \label{definition:pack}
  We define the `packing' and `unpacking' morphisms $\objpack$ and $\objunpack$ in terms of objects of $\CatSW$.
  Let $A \in \CatSW$ be an object.
  Then $\objpack(A)$ is the morphism defined inductively as follows:

  \begin{gather*}
    \objpack(\unitSW)              \quad \defeq \quad \stikzfig{empty}
    \qquad
    \objpack(\ObjD{\unitW})        \quad \defeq \quad \stikzfig{pack-adapter}
    \qquad
    \objpack(\ObjD{W})             \quad \defeq \quad \stikzfig{pack-id} 
    \\
    \objpack(\ObjD{A \TensorC B})  \quad \defeq \quad \stikzfig{pack-tensorc}
    \qquad
    \objpack(X \TensorD Y)         \quad \defeq \quad \stikzfig{pack-tensord}
  \end{gather*}

  And define $\objunpack(A)$ as $\objpack(A)^{-1}$.
\end{definition}

\begin{remark}
  It can be more intuitive to define $\objunpack$ first, thinking of it as the
  adapter which removes extraneous $\unitC$ objects and `normalises' the object
  into a flat array of $\ObjD{W}$ objects.
  In this view, $\objpack$ is the adapter morphism taking a fixed number of
  $\ObjD{W}$ objects and assembling them into a certain bracketing, with unit
  objects inserted as appropriate.
\end{remark}

In Definition \ref{definition:pack} we implicitly used that $\CatSW$ is a
groupoid to define $\objunpack$, which we now prove:

\begin{proposition} $\CatSW$ is a groupoid.
\end{proposition}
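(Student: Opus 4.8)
The plan is to leverage Proposition~\ref{proposition:catsw-generated-by-adapters}, which tells us that every morphism of $\CatSW$ can be built from the adapter generators $\Adapter, \adapter$ and their inverses. Since being an isomorphism is a property closed under the operations used to assemble morphisms, a structural induction on this generating data will suffice. I would frame the argument as: establish that each generator is an isomorphism (base cases), establish that isomorphisms are stable under the two ways morphisms combine (inductive steps), and conclude.

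First I would record the base cases. Each of the four adapter generators is an isomorphism: $\Adapter$ and $\AdapterInv$ are mutually inverse, and $\adapter$ and $\adapterInv$ are mutually inverse, which is precisely what the adapter isomorphism equations \eqref{equation:catd-adapter-equations} assert; identities are isomorphisms trivially. Next I would establish the inductive steps, namely that the class of isomorphisms in the strict monoidal category $\CatSW$ is closed under both composition and tensor. If $f$ and $g$ are isomorphisms with inverses $f^{-1}$ and $g^{-1}$, then $f \cp g$ is an isomorphism with inverse $g^{-1} \cp f^{-1}$, and $f \TensorD g$ is an isomorphism with inverse $f^{-1} \TensorD g^{-1}$ (using functoriality of $\TensorD$). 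Combining the base cases with these two closure properties, a structural induction on the expression of an arbitrary morphism of $\CatSW$ in terms of adapters, as supplied by Proposition~\ref{proposition:catsw-generated-by-adapters}, shows that every morphism is an isomorphism, which is exactly the statement that $\CatSW$ is a groupoid.

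I do not expect a serious obstacle: the content is entirely carried by Proposition~\ref{proposition:catsw-generated-by-adapters} together with the adapter equations. The only point requiring mild care is ensuring the induction tracks \emph{both} composition and tensor, since `generated by the adapters' means closure under both operations; but each closure property is the standard fact that isomorphisms in a monoidal category are stable under $\cp$ and $\TensorD$, so neither step presents any genuine difficulty.
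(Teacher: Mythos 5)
Your proof is correct and takes essentially the same approach as the paper: both arguments proceed by structural induction, with the generators invertible by the defining equations and inverses propagated through the two composition operations via $(f \cp g)^{-1} = g^{-1} \cp f^{-1}$ and $(f \TensorD g)^{-1} = f^{-1} \TensorD g^{-1}$. The only cosmetic difference is that you route the base case through Proposition~\ref{proposition:catsw-generated-by-adapters} (reducing to adapters alone), whereas the paper cites Definition~\ref{definition:catd} directly to give inverses for all generators and identities.
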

\begin{proof}
  Generators and identities have inverses by Definition
  \ref{definition:catd},
  which allows an inductive definition for tensor and composition, i.e.
  $ (f \cp g)^{-1} = g^{-1} f^{-1} $
  and
  $ (f \TensorD g)^{-1} = f^{-1} \TensorD g^{-1} $
  respectively.
\end{proof}



\topicsentence{
  Now, in order to define the canonical arrow as a composition of $\objpack$ and
  $\objunpack$, we will need the following lemma which states that for objects
  of the same size, we can compose their $\objunpack$ and $\objpack$ morphisms.
}

\begin{proposition} $\objpack(A) : \ObjD{W}^{\objsize(A)} \to A$.\\
  In other words, for an object $A$ of size $n$, the domain of $\objpack(A)$ is
  the $n$-fold $\TensorD$-tensoring of $\ObjD{W}$.
\end{proposition}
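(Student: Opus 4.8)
The plan is to prove this by structural induction on the object $A \in \CatSW$, following exactly the case split used to define $\objpack$ (Definition \ref{definition:pack}) and $\objsize$ (Definition \ref{definition:objsize}). In each case I must check two things: that the codomain of $\objpack(A)$ is $A$, and that its domain is the $\objsize(A)$-fold $\TensorD$-tensoring $\ObjD{W}^{\objsize(A)}$. The codomain claim is immediate in every case from the types of the adapter generators $\Adapter$ and $\adapter$ read off \eqref{equation:catd-generators}, so the genuine content is the domain computation.

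First I would dispatch the three base cases directly. For $A = \unitSW$ the morphism $\objpack(\unitSW)$ is the empty diagram and $\objsize(\unitSW) = 0$, so its domain is the empty tensor $\ObjD{W}^0 = \unitSW$. For $A = \ObjD{\unitW}$ we have $\objsize = 0$ and $\objpack$ is the generator $\adapter : \unitSW \to \ObjD{\unitW}$, whose domain is again $\unitSW = \ObjD{W}^0$. For $A = \ObjD{W}$ we have $\objsize = 1$ and $\objpack$ is $\id_{\ObjD{W}}$, whose domain is $\ObjD{W} = \ObjD{W}^1$.

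For the inductive cases I would assume the claim for the immediate subobjects. In the case $A = \ObjD{B \TensorC C}$, the morphism $\objpack(A)$ is built by tensoring $\objpack(\ObjD{B})$ with $\objpack(\ObjD{C})$ and postcomposing with $\Adapter_{B,C} : \ObjD{B} \TensorD \ObjD{C} \to \ObjD{B \TensorC C}$, so by the inductive hypothesis its domain is $\ObjD{W}^{\objsize(B)} \TensorD \ObjD{W}^{\objsize(C)}$. Similarly, for $A = X \TensorD Y$ the morphism is $\objpack(X) \TensorD \objpack(Y)$, with domain $\ObjD{W}^{\objsize(X)} \TensorD \ObjD{W}^{\objsize(Y)}$.

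The one point requiring care, and the main (if mild) obstacle, is collapsing these tensor products of powers back into a single power. Here I would invoke strictness of $\CatSW$: since $\TensorD$ is strictly associative and unital, the concatenation $\ObjD{W}^{m} \TensorD \ObjD{W}^{n}$ is literally \emph{equal} to $\ObjD{W}^{m+n}$, not merely isomorphic. Combined with the additive clauses $\objsize(\ObjD{B \TensorC C}) = \objsize(B) + \objsize(C)$ and $\objsize(X \TensorD Y) = \objsize(X) + \objsize(Y)$ of Definition \ref{definition:objsize}, this identifies the domain with $\ObjD{W}^{\objsize(A)}$ in each inductive step, completing the induction.
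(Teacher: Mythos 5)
Your proof is correct and follows essentially the same route as the paper, which simply states ``simple induction on objects''; you have filled in exactly that structural induction, with the right case split and the correct key observation that strictness of $\TensorD$ makes $\ObjD{W}^{m} \TensorD \ObjD{W}^{n}$ literally equal to $\ObjD{W}^{m+n}$.
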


\begin{proof}
  Simple induction on objects (the domain of each $\objpack(A)$ is either $\unitSW$, $\ObjD{W}^k$ or a tensoring of terms)
\end{proof}

%

\begin{definition} 
  \label{definition:canonical-arrows}
  To each pair of objects $A$, $B$ of the same size, we can define a canonical arrow as follows:
  \[ \canonical(A, B) \defeq \objunpack(A) \cp \objpack(B) \]
\end{definition}

Note that the composition of Definition \ref{definition:canonical-arrows} is well-typed because
$\objsize(A) = \objsize(B)$ by Proposition \ref{proposition:catsw-morphisms-preserve-size}:
$
  \cod(\objunpack(A)) = \ObjD{W}^{\objsize(A)}
                      = \ObjD{W}^{\objsize(B)}
                      = \dom(\objpack(B))
$

\begin{example}
  The canonical arrow between $W \TensorW (\unitW \TensorW W)$ and $(W \TensorW
  \unitW) \TensorW W$ is
  $ \scalebox{0.5}{\tikzfig{example-canonical-associator-with-unit-in-center}} $.
  Note that this is equal to the associator $\assoc_{W, \unitW, W}$.
\end{example}

We can now show that every morphism $f : A \to B$ in $\CatSW$ is equal to
$\canonical(A, B)$.

\begin{proposition}
  \label{proposition:all-morphisms-canonical}
  $f = \objunpack(A) \cp \objpack(B)$
  for all $f : A \to B$ in $\CatSW$.
\end{proposition}

\begin{proof}
  By induction.
  On the base case--generators--the proof is straightforward; we give it for
  identities and generators \generator{g-adapter} and \generator{g-big-adapter},
  with the proofs for inverse generators following by a symmetric argument.
  \begin{align*}
    \canonical(X, X)
      & = \objunpack(X) \cp \objpack(X)
      = \objpack(X)^{-1} \cp \objpack(X)
      = \id_{X} \\
    \canonical(\unitSW, \ObjD{\unitW})
      & = \objunpack(\unitSW) \cp \objpack(\ObjD{\unitW})
      = \stikzfig{empty} \cp \tikzfig{g-s-adapter}
      = \stikzfig{g-s-adapter} \\
    \canonical(\ObjD{A} \TensorD \ObjD{B}, \ObjD{A \TensorC B})
      & = \objunpack(\ObjD{A} \TensorD \ObjD{B}) \cp \objpack(\ObjD{A \TensorC B})
      = \stikzfig{proof-preorder-base-case-Adapter-1}
      = \stikzfig{g-s-big-adapter}
  \end{align*}
  The composition of canonical morphisms is canonical:
  \begin{align*}
    \canonical(X, Y) \cp \canonical(Y, Z)
      & = \objunpack(X) \cp \objpack(Y) \cp \objunpack(Y) \cp \objpack(Z) \\
      & = \objunpack(X) \cp \objpack(Y) \cp \objpack(Y)^{-1} \cp \objpack(Z) \\
      & = \objunpack(X) \cp \objpack(Z) \\
      & = \canonical(X, Z)
  \end{align*}
  And so is the tensor product:
  \begin{align*}
    \canonical(X_1, Y_1) \TensorD \canonical(X_2, Y_2)
      & = \scalebox{0.75}{\tikzfig{proof-preorder-tensor-1}} \\
      & = \scalebox{0.75}{\tikzfig{proof-preorder-tensor-2}} \\
      & = \canonical(X_1 \TensorD X_2, Y_1 \TensorD Y_2) \qedhere
  \end{align*}
\end{proof}

\begin{proposition} $\CatSW$ is a preorder.
  \label{proposition:catsw-preorder}
\end{proposition}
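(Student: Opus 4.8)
The plan is to observe that a preorder, regarded as a category, is precisely one in which any two parallel morphisms coincide, i.e.\ there is at most one morphism between any given pair of objects. So it suffices to show that for any objects $A, B \in \CatSW$ and any two morphisms $f, g : A \to B$, we have $f = g$.

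All the real work has already been done in Proposition~\ref{proposition:all-morphisms-canonical}. First I would invoke that proposition, which tells us that \emph{every} morphism $f : A \to B$ in $\CatSW$ is equal to the canonical arrow $\canonical(A, B) = \objunpack(A) \cp \objpack(B)$. The crucial point is that $\canonical(A, B)$ depends only on the endpoints $A$ and $B$, not on the particular morphism chosen. Hence, given any two parallel morphisms $f, g : A \to B$, we have $f = \canonical(A, B) = g$, and so $f = g$. This immediately yields that $\CatSW$ is a preorder.

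I would also remark that the case where $A$ and $B$ admit no morphism at all (which occurs whenever $\objsize(A) \neq \objsize(B)$, by Proposition~\ref{proposition:catsw-morphisms-preserve-size}) poses no problem: the ``at most one morphism'' condition is satisfied vacuously, and Proposition~\ref{proposition:all-morphisms-canonical} is only applied to morphisms that actually exist.

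There is no real obstacle here, since the substantive content lives in the normal-form argument culminating in Proposition~\ref{proposition:all-morphisms-canonical}; the present statement is essentially a one-line corollary repackaging that result as the preorder property. If any care is needed, it is only in spelling out the (standard) equivalence between ``$\CatSW$ is a preorder'' and ``any two parallel arrows of $\CatSW$ are equal,'' so that the reader sees the connection to Theorem~\ref{theorem:coherence} and the use made of this fact in Proposition~\ref{proposition:if-catd-preorder-then-catc-preorder}.
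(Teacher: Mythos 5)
Your proof is correct and follows essentially the same route as the paper: both deduce the preorder property directly from Proposition~\ref{proposition:all-morphisms-canonical} (every morphism $f : A \to B$ equals $\canonical(A,B)$, which depends only on the endpoints), with Proposition~\ref{proposition:catsw-morphisms-preserve-size} handling which hom-sets can be nonempty. Your explicit treatment of the vacuous case and of the ``at most one parallel arrow'' formulation is a slightly more careful write-up of the same argument.
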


\begin{proof}
  By Proposition \ref{proposition:catsw-morphisms-preserve-size} we know that all morphisms
  $f : A \to B$ have the property that $\objsize(A) = \objsize(B)$.
  We then define for any such objects a canonical morphism $\canonical(A, B)$ in
  Definition \ref{definition:canonical-arrows}.
  This canonical isomorphism is unique by Definition \ref{proposition:all-morphisms-canonical},
  and so $\CatSW$ is a preorder.
\end{proof}

Since we have now proven that $\CatSW$ is a preorder, it is now straightforward
to prove Theorem \ref{theorem:coherence}.
Note that this is essentially the opposite of the approach taken by Mac Lane,
who \emph{defines} a preorder, and then shows the existence of a unique strict
monoidal functor.

\begin{proof} (Proof of Theorem \ref{theorem:coherence}) \\
  By Proposition \ref{proposition:catw-unique-strict-monoidal-functor} there
  is a unique, strict monoidal functor from $\CatW$ to an arbitrary monoidal
  category $\CatM$ with $W \mapsto A$ for some $A \in \CatM$.
  Moreover, $\CatSW$ is a preorder, and so by Proposition
  \ref{proposition:if-catd-preorder-then-catc-preorder}, so is $\CatW$.
\end{proof}

A first consequence of the coherence theorem is that
$\FunG$ is a \emph{strict} inverse to $\FunF$ for morphisms
$f : \ObjD{A} \to \ObjD{B}$ in $\CatSW$.

\begin{proposition} If $f : \ObjD{A} \to \ObjD{B}$ then $\FunF(\FunG(f)) = f$.
  \label{proposition:fung-partial-inverse}
\end{proposition}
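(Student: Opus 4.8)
The plan is to reduce everything to the naturality of the isomorphism $\epsilon : \FunF \circ \FunG \to \id_{\CatD}$ built in Definition \ref{definition:epsilon} and shown natural in Proposition \ref{proposition:g-then-f-equivalence}. Naturality of $\epsilon$ at an arbitrary morphism $f : X \to Y$ of $\CatD$ is exactly the identity
\[ \FunF(\FunG(f)) = \epsilon_X \cp f \cp \epsilon_Y^{-1}, \]
which is precisely the fact already exploited in the proof of Proposition \ref{proposition:epsilon-commutes-composition}. So the only thing left to determine is the behaviour of $\epsilon$ at the singleton objects $\ObjD{A}$ and $\ObjD{B}$.

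First I would specialise this identity to $X = \ObjD{A}$ and $Y = \ObjD{B}$. By the base cases of Definition \ref{definition:epsilon} we have $\epsilon_{\ObjD{A}} = \id_{\ObjD{A}}$ and $\epsilon_{\ObjD{B}} = \id_{\ObjD{B}}$: a singleton object is not an outer $\TensorD$ of smaller objects, so the recursive clause defining $\epsilon$ never fires. Substituting these gives
\[ \FunF(\FunG(f)) = \id_{\ObjD{A}} \cp f \cp \id_{\ObjD{B}} = f, \]
which is the claim.

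The subtlety worth flagging is that this must hold for \emph{every} $f : \ObjD{A} \to \ObjD{B}$, not merely those of the form $\MorD{g}$: a morphism with singleton domain and codomain may still route through composite objects via the adapters, for instance factoring as $\ObjD{A} \xrightarrow{\AdapterInv} \cdots \to \ObjD{B}$ when $A$ is itself a tensor product $A_1 \TensorC A_2$. I do not expect a genuine obstacle, since naturality of $\epsilon$ holds at all morphisms and so no case analysis on the shape of $f$ is required; the entire weight of the argument rests on Proposition \ref{proposition:g-then-f-equivalence} together with the two base-case values of $\epsilon$. The framing of the statement as a corollary of coherence is then largely a matter of placement, the equivalence data having already been assembled in Section \ref{section:strictness}.
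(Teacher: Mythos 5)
Your proof is correct, but it takes a genuinely different route from the paper's. The paper treats this proposition as a corollary of coherence: it is placed after the proof that $\CatSW$ is a preorder, and its proof simply observes that $\FunG(\ObjD{A}) = A$, so that $\FunF(\FunG(f))$ and $f$ are parallel arrows $\ObjD{A} \to \ObjD{B}$ in $\CatSW$, hence equal because $\CatSW$ is a preorder. That argument is one line, but it only makes sense for $\CatC = \CatW$, since a general $\CatD$ is certainly not a preorder. Your argument instead uses only the Section~\ref{section:strictness} equivalence data: naturality of $\epsilon$ (Proposition~\ref{proposition:g-then-f-equivalence}) instantiated at $f$, giving $\FunF(\FunG(f)) = \epsilon_{\ObjD{A}} \cp f \cp \epsilon_{\ObjD{B}}^{-1}$, together with the base case $\epsilon_{\ObjD{A}} = \id_{\ObjD{A}}$ of Definition~\ref{definition:epsilon}. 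This buys generality---the identity holds for the strictification of an \emph{arbitrary} monoidal category $\CatC$, not just the free one---and it shows the result does not in fact depend on coherence, despite the paper's framing of it as ``a first consequence of the coherence theorem.'' The trade-offs are that your proof leans entirely on Proposition~\ref{proposition:g-then-f-equivalence}, whose inductive verification the paper only sketches, whereas the paper's route leans on the fully worked-out preorder argument; also, your side remark that the recursive clause of Definition~\ref{definition:epsilon} never fires at singleton objects is correct under the intended reading of $\CatD$-objects as lists (the clause $\epsilon_{\ObjD{A} \TensorD R}$ applies only when the tail $R$ is nonempty), a point worth making explicit since in a strict category $\ObjD{A} = \ObjD{A} \TensorD \unitD$ as objects.
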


\begin{proof}
  We know that for any $A \in \CatW$ that $\FunG(\ObjD{A}) = A$.
  Thus for $f : \ObjD{A} \to \ObjD{B}$ we have $\FunG(f) : A \to B$
  and thus $\FunF(\FunG(f)) : \ObjD{A} \to \ObjD{B}$.
  But $\CatSW$ is a preorder, so we have
  $\FunF(\FunG(f)) = f$.
\end{proof}

Proposition \ref{proposition:fung-partial-inverse} guarantees that any morphism
of this type formed from adapters genuinely represents a specific morphism in
$\CatD$ built from associators and unitors; we can use this fact in to restate
the coherence theorem in terms of adapter morphisms.
Mac Lane's corollary then follows straightforwardly: the basic idea is to
`export' commuting diagrams from $\CatW$ to an arbitrary monoidal category by
interpreting the objects of $\CatW$ as functors, and arrows as natural
transformations.
We provide a graphical exposition of this proof in
Section~\ref{section:corollary-proof}.

\section{Coherence Corollary}
\label{section:corollary-proof}

\topicsentence{
  We can now state and prove Mac Lane's corollary~\cite[p.~169]{CFWM} to the coherence
  theorem.
}
Note that whereas this proof of the corollary is just a reformulation of Mac
Lane's argument in diagrammatic terms, the previous proof of Theorem
\ref{theorem:coherence} differs significantly.

\topicsentence{
  Let us begin with an informal statement of the theorem.
}
Take a commuting diagram of $\CatW$, for instance the triangle axiom below left
\eqref{equation:example-catw-triangle}:
\begin{multicols}{2}
  \begin{equation}
    \label{equation:example-catw-triangle}
    \scalebox{0.8}{
      \begin{tikzpicture}[node distance=2cm]
        \node (A) [					 ] {$W \TensorC (\unitC \TensorC W)$};
        \node (B) [right of=A] {};
        \node (C) [right of=B] {$(W \TensorC \unitC) \TensorC W$};
        \node (D) [below of=B] {$W \TensorC W$};

        \draw [->,thick] (A) to [] node[above] {$\assoc_{W,\unitC,W}$} (C);
        \draw [->,thick] (A) to [] node[below left ] {$\id_W \TensorC \unitl_W$} (D);
        \draw [->,thick] (C) to [] node[below right] {$\unitr_W \TensorC \id_W$} (D);
      \end{tikzpicture}
    }
  \end{equation}

  \begin{equation}
    \label{equation:example-catm-triangle}
    \scalebox{0.8}{
      \begin{tikzpicture}[node distance=2cm]
        \node (A) [					 ] {$A \TensorC (\unitC \TensorC B)$};
        \node (B) [right of=A] {};
        \node (C) [right of=B] {$(A \TensorC \unitC) \TensorC B$};
        \node (D) [below of=B] {$A \TensorC B$};

        \draw [->,thick] (A) to [] node[above] {$\assoc_{A,\unitC,B}$} (C);
        \draw [->,thick] (A) to [] node[below left ] {$\id_A \TensorC \unitl_B$} (D);
        \draw [->,thick] (C) to [] node[below right] {$\unitr_A \TensorC \id_B$} (D);
      \end{tikzpicture}
    }
  \end{equation}
\end{multicols}
The coherence theorem allows one to `export' this diagram to an arbitrary
monoidal category $\CatM$ by replacing each $i^{th}$ occurrence of W in a vertex
with some $A_i$ in $\CatM$.
For instance, let $A$ and $B$ be $\CatM$ objects, then we substitute the first
occurrence of $W$ in each vertex for $A$, and the second for $B$, giving us the
following commuting diagram in $\CatM$ on the right
\eqref{equation:example-catm-triangle}.

\begin{remark}
  The coherence theorem does \textbf{not} say that diagrams in
  $\CatM$ whose edges are components of natural transformations all commute;
  only those which correspond to diagrams in $\CatW$.  Put another way, if we
  have parallel $\CatM$-arrows $f, g : A \to B$ such that $f, g$ are constructed
  from associators and unitors, we may not in general conclude that $f = g$.
\end{remark}

\topicsentence{
  Now, it is not immediately obvious how even this informal coherence result
  follows from the statement of Theorem~\ref{theorem:coherence}.
}
Although for some fixed object $X \in \CatM$ there is a unique, strict monoidal
functor $\FunU : \CatW \to \CatM$,
this does not let us obtain every diagram we would like.
In particular, using $\FunF$ in this way we cannot obtain diagrams with multiple
variables such as \eqref{equation:example-catm-triangle}--only those where
\emph{every} $W$ is replaced by $X$.

\topicsentence{
  To allow for diagrams with multiple variables, Mac Lane constructs the
  non-strict monoidal category $\ItM$.
}

\begin{definition}[{$\ItM$, \cite[p.~169]{CFWM}}]
  Fix an arbitrary monoidal category $(\CatM, \TensorM, \unitM, \assoc, \unitl, \unitr)$.
  Then $\ItM$ is the category with:
  \begin{enumerate}
    \item Objects: functors $\CatM^n \to \CatM$
    \item Arrows: natural transformations
  \end{enumerate}
  With $\CatM^n$ denoting the $n$-fold product $\CatM \times \overset{n}{\ldots} \times \CatM$
\end{definition}

\begin{proposition} $\ItM$ is a (non-strict) monoidal category (from~\cite[p.~169]{CFWM}) \\
  \label{proposition:itm-monoidal}
  The monoidal unit is the constant functor $\functor{Const}_{\unit} : \CatT \to \CatM$.
  The monoidal product $\TensorItM : \ItM \times \ItM \to \ItM$ is defined on
  objects (functors) as:
  \[ \Functor{F} \TensorItM \Functor{G} = \stikzfig{itm-tensor-objects} \]
  and pointwise on arrows
  $\eta : \Functor{F}_1 \to \Functor{G}_1$ and
  $\mu : \Functor{F}_2 \to \functor{G}_2$
  so that for $\Functor{F}_1, \Functor{G}_1 : \CatM^n \to \CatM$
  and $\Functor{F}_2, \Functor{G}_2 : \CatM^m \to \CatM$
  the component at $A \times B \in \CatM^n \times \CatM^m$ is
  \begin{align*}
    (\eta \TensorItM \mu)_{A \times B}
        = \left[ \scalebox{0.9}{\stikzfig{itm-tensor-arrows-lhs}}\right ]_{A \times B}
        = \scalebox{1}{\stikzfig{itm-tensor-arrows-rhs}}
  \end{align*}
  Associators and unitors are similarly defined pointwise, i.e.:
  \begin{gather*}
    \begin{aligned}
      \unitl_{\Functor{F}_A}
        & = \left[ \scalebox{1}{\stikzfig{itm-unitl-lhs}} \right]_A \\
        & = \scalebox{1}{\stikzfig{itm-unitl-rhs}} \\
    \end{aligned}
    \qquad
    \begin{aligned}
      \unitr_{\Functor{F}_A}
        & = \scalebox{1}{\stikzfig{itm-unitr-rhs}} \\
    \end{aligned}
    \\[2em]
    \begin{aligned}
      \assoc_{{\Functor{F}, \Functor{G}, \Functor{H}}_{A,B,C}}
        & = \scalebox{1}{\stikzfig{itm-assoc-rhs}} \\
    \end{aligned}
  \end{gather*}
\end{proposition}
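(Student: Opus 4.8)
The plan is to verify directly the axioms of a monoidal category for $(\ItM, \TensorItM, \unitItM, \assoc, \unitl, \unitr)$, exploiting that every piece of structure is defined \emph{pointwise} from the corresponding structure of $\CatM$. The guiding principle throughout is that a diagram of natural transformations in $\ItM$ commutes if and only if it commutes at every object of the relevant product category $\CatM^k$, and that at each such point the diagram is exactly an instance of the corresponding diagram in $\CatM$. Thus essentially the entire verification reduces to facts already assumed about $\CatM$.

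First I would check that $\TensorItM$ is a bifunctor $\ItM \times \ItM \to \ItM$. On objects it sends $\Functor{F} : \CatM^n \to \CatM$ and $\Functor{G} : \CatM^m \to \CatM$ to the functor carrying $A \times B$ to $\Functor{F}(A) \TensorM \Functor{G}(B)$ on $\CatM^{n+m} \cong \CatM^n \times \CatM^m$; functoriality of this assignment is immediate from functoriality of $\Functor{F}$, $\Functor{G}$, and of $\TensorM$. On arrows it is the pointwise tensor of natural transformations, so I must check that $\eta \TensorItM \mu$ is again a natural transformation and that $\TensorItM$ preserves identities and composition of natural transformations. Both reduce to functoriality of $\TensorM$ together with the interchange law of $\CatM$, applied componentwise.

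Next I would treat the structural isomorphisms. Each component of $\assoc_{\Functor{F},\Functor{G},\Functor{H}}$, $\unitl_{\Functor{F}}$, and $\unitr_{\Functor{F}}$ at a tuple of objects is the corresponding $\CatM$-isomorphism evaluated at the images of those objects; hence each component is an isomorphism, and the whole family is a natural isomorphism in $\ItM$ precisely because $\assoc$, $\unitl$, $\unitr$ are natural in $\CatM$. There are in fact two layers of naturality to verify — that each $\assoc_{\Functor{F},\Functor{G},\Functor{H}}$ is a legitimate arrow of $\ItM$ (a natural transformation between the two bracketings), and that the family is natural in the $\ItM$-objects $\Functor{F},\Functor{G},\Functor{H}$ — but both collapse to the single naturality statement for $\assoc$ in $\CatM$ once evaluated pointwise, and likewise for the unitors.

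Finally, the coherence conditions. The pentagon for $\ItM$ is a diagram of natural transformations whose component at any tuple $(A,B,C,D)$ is exactly the pentagon for $\assoc$ in $\CatM$ evaluated at $(\Functor{F}(A), \Functor{G}(B), \Functor{H}(C), \Functor{K}(D))$; similarly the triangle at $(A,B)$ is the triangle of $\CatM$. Since these hold in $\CatM$ by hypothesis, the $\ItM$-diagrams commute pointwise and hence commute. The one point requiring care — and the main obstacle — is the bookkeeping of arities: objects of $\ItM$ have varying arity and $\TensorItM$ adds arities, so one must check that the identification $\CatM^{n+m} \cong \CatM^n \times \CatM^m$ used to define the tensor is \emph{strictly} associative and unital. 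This holds because addition of natural numbers is strictly associative with strict unit $0$, so reassociating arguments introduces no further coherence data; all of the non-strictness of $\ItM$ is inherited from $\CatM$, and the proof reduces entirely to the pointwise argument above.
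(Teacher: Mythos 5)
Your proof is correct and takes essentially the same route as the paper's: every piece of structure in $\ItM$ is defined pointwise, so a diagram in $\ItM$ commutes iff its components commute, and naturality of the associators/unitors together with the pentagon and triangle axioms all reduce to the corresponding facts in $\CatM$. Your write-up is in fact more thorough than the paper's brief paragraph, since you additionally verify bifunctoriality of $\TensorItM$ and the strictness of the arity identification $\CatM^{n+m} \cong \CatM^n \times \CatM^m$, points the paper leaves implicit.
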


\begin{proof}
  Associators and unitors are natural since each of their components is natural.
  That is, given a natural transformation
  $\mu : \Functor{F} \to \Functor{G}$
  we know that
  $\unitr_\Functor{F} \cp \mu = (\mu \TensorItM \id) \cp \unitr_\Functor{G}$
  precisely because components are both sides are always equal, i.e for all $A$ we have
  $\unitr_{\Functor{F}_A} \cp \mu_A = (\mu \TensorItM \id)_A \cp \unitr_{\Functor{G}_A}$.
  A similar argument applies to $\assoc$ and $\unitl$.
  Further, the axioms of monoidal categories are satisfied for the same reason:
  each diagram commutes because all its \emph{components} commute using the
  monoidal structure of $\CatM$.
\end{proof}


This will allow us to regard objects $A \in \CatW$ of size $n$ as \emph{functors}
by applying the monoidal functor $\FunU(A) : \CatM^n \to \CatM$ as follows:
\begin{equation}
  \label{equation:u-catw-to-itm}
  \unitM        \mapsto \stikzfig{generator-functor-adapter}
  \qquad
  W             \mapsto \stikzfig{generator-functor-id}
  \qquad
  A \TensorW B  \mapsto \stikzfig{generator-functor-recursive}
\end{equation}

In the above, $\stikzfig{g-functor-const}$ denotes the constant functor
$\functor{Const}_{\unit} : \CatT \to \CatM$ mapping the single object of $\CatT$
to the monoidal unit $\unitM$.

Now, $\FunU : \CatW \to \ItM$ preserves diagrams since it is a functor, and so
we may picture the triangle axiom in $\ItM$ graphically as below.
\begin{equation}\label{eq:im}
	  \raisebox{0.9cm}{\scalebox{0.8}{\tikzfig{example-triangle-axiom-graphically}}}
\end{equation}
Note the use of graphical notation is justified by Proposition~\ref{proposition:itm-monoidal}.
 In~\eqref{eq:im}, vertices (in blue) depict functors, and edges depict natural
transformations.
This transformation of $\CatW$-objects to functors formalises the intuition of
`replacing the $i^{th}$ occurrence of $W$ in a diagram'.
That is, for a given diagram in $\CatW$ with vertices $V_i$ of size $n$,
we now simply make a particular choice of $\CatM^n$-object for each vertex
and apply $\FunU(V_i) : \CatM^n \to \CatM$ to obtain a `multivariable' diagram
in $\CatM$.

\topicsentence{
  Mac Lane then states the coherence result corollary as follows:
}

\begin{corollaryC}[{\cite[p.~169]{CFWM}}]
  Let $\cat{M}$ be a monoidal category. There is a function which assigns to each pair of objects
  $A, B \in \CatW$ of $\objsize$ $n$ a (unique) natural isomorphism
  $ \canonical_{\CatM}(A, B) : \FunU(A) \to \FunU(B) $
  called the canonical map from $\FunU(A)$ to $\FunU(B)$,
  in such a way that the identity arrow $\unitItM \to \unitItM$ is canonical
  (between functors of $0$ variables)
  the identity transformation $\id : \id_{\cat{M}} \to \id_{\cat{M}}$ is
  canonical,
  $\assoc, \unitl, \unitr$ (and their inverses) are canonical,
  and the composite and $\TensorItM$-product of canonical maps is canonical.
\end{corollaryC}

\begin{proof}[Proof from {\cite[p.~169]{CFWM}}]
  Let $\FunU : \CatW \to \ItM$ be the unique strict monoidal functor mapping $W$
  to the identity functor $\id : \CatM \to \CatM$ so that $\FunU$ acts on
  objects as in \eqref{equation:u-catw-to-itm}.
  Then $\FunU$ acts on morphisms of $\CatW$ as follows:
  \[
    \id_{\unitW}    \mapsto \id               
    \qquad \qquad
    \id_{W}         \mapsto \id               
  \]
  \[
    \unitl_A        \mapsto \unitl_{\FunU(A)} 
    \qquad \qquad
    \unitr_A        \mapsto \unitr_{\FunU(A)} 
    \qquad \qquad
    \assoc_{A,B,C}  \mapsto \assoc_{\FunU(A), \FunU(B), \FunU(C)} 
  \]
  \[
    f \TensorW g    \mapsto \FunU(f) \TensorItM \FunU(g) 
  \]
  And so $\canonical_{\CatM}(A, B) = \FunU(f)$ for each unique $f : A \to B$.
\end{proof}

Finally, note that the canonical morphism $\canonical_{\CatM}(A, B)$ can be
defined as
$ \canonical_{\CatM}(A, B) = (\FunU \circ \FunG)(\canonical(A, B)) $
thus we may use the normal form $\canonical(A, B)$ to determine the canonical
natural isomorphism in $\ItM$.

\section{Conclusions}
\label{section:conclusions}
The body of work on string diagrams in general is broad and growing rapidly. 
It is therefore slightly surprising that the fundamental issue of non-strict tensorial composition has not received more attention.
On the one hand, this is reasonable.
The assumption of strictness does not entail a loss of generality, as indeed we have confirmed via an example in Sec.~\ref{section:exnonstrict}.
However, non-strict tupling is a basic feature of programming languages, and even hardware description languages, and modelling it using string diagrams requires the proper mathematical framework.

This framework, the main contribution of the paper given in Def.~\ref{definition:catd}, shows a way to strictify a possibly non-strict monoidal category.
The body of the paper proves that the definition has all the desired properties and, in the process, we discuss two new proofs for Mac Lane's strictness and coherence theorems, respectively.
We believe that, as is usually the case, the string-diagrammatic perspective has pedagogical value, lending concrete intuitions to what otherwise seems like an abstract symbolic exercise.

\subsection{Further work}

Lack of support for non-strict tensor limits the range of many applications of string diagrams.
The first immediate question to study the strictification recipe in the context of applications to programming languages with higher-order functions, such as high-level circuit synthesis~\cite{DBLP:conf/popl/Ghica07} or automatic differentiation~\cite{DBLP:journals/corr/abs-2107-13433}. These can be formulated using hierarchical string diagrams (based on `functorial boxes'~\cite{DBLP:conf/csl/Mellies06}) to represent the structure  of monoidal-closed and cartesian-closed categories, see~\cite{GhicaZanasi23} for an overview.
Similar considerations motivate the study of strictification of traced monoidal categories, which can be used as models of digital circuits~\cite{DBLP:conf/csl/GhicaJL17}.

Further, our construction expands the use of datastructures and algorithms
currently limited only to the strict case (e.g., \cite{wilson2021cost,WilsonZanasi23b}).
Such datastructures are typically based on graph or hypergraph representations
for performance reasons; applying our construction allows us to leverage those benefits essentially for free.
In cases where such datastructures and algorithms are proven correct, it may be
beneficial to reproduce the proofs in this paper in a formal theorem prover in
order to provide end-to-end verification of applications.

Finally, a formal understanding of non-strict monoidal categories may open the door to more graphical approaches for theorem proving.
Interactive graphical theorem provers using string diagrams for strict monoidal categories such as \url{homotopy.io} represent a refreshingly new approach to the design of proof assistants.
Since models of, for example, intuitionistic logic are non-strict, the novel string diagrams in this paper could be used perhaps to develop similar graphical proof assistant for more conventional logics.

\paragraph{Acknowledgements} The second and third author acknowledge support from \textsc{esprc} grant EP/V002376/1. The third author also acknowledge support from \textsc{miur} PRIN P2022HXNSC (European Union - Next Generation EU).

\bibliographystyle{alphaurl}
\bibliography{main}

\appendix

\section{Sequential Normal Form}
\label{section:sequential-normal-form}
The following proposition is well-known (see for example
\cite{lafont_circuits}) and straightforward to prove, but we provide a proof
anyway for completeness.

\begin{proposition}
  \label{proposition:sequential-normal-form}
  let $\cat{C}$ be a monoidal category presented by generators $\Sigma$ and some equations.
  Then any (finite) term $t$ representing a morphism of $\cat{C}$ can be factored into `slices':
  \[ (\id \otimes \epsilon_1 \otimes \id) \cp (\id \otimes \epsilon_2 \otimes \id) \cp \ldots \cp (\id \otimes \epsilon_n \otimes \id) \]
  where each $\epsilon_i$ is a generator.
\end{proposition}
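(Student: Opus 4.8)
The plan is to proceed by structural induction on the syntax of the term $t$, using only the bifunctoriality of $\otimes$ (equivalently, the interchange law) that holds in any monoidal category. I regard a morphism term as built from generators in $\Sigma$, identities $\id$, sequential composition $\cp$, and the tensor $\otimes$, and I take a \emph{slice} to be any term of the form $\id \otimes \epsilon \otimes \id$ with $\epsilon \in \Sigma$ a single generator (where either flanking identity may be an identity on $\unit$). The claimed normal form is then a sequential composite of slices, with the empty composite ($n = 0$) denoting an identity. For the base cases, an identity $\id_X$ is the empty composite, and a single generator $\epsilon : A \to B$ is already the slice $\id_\unit \otimes \epsilon \otimes \id_\unit$.

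The composition case $t = u \cp v$ is immediate: by the induction hypothesis $u$ and $v$ each factor as composites of slices, and $u \cp v$ is simply the concatenation of the two slice sequences. The tensor case $t = u \otimes v$, with $u : A \to B$ and $v : C \to D$, is where the real work lies. The key observation is the whiskering decomposition
\[ u \otimes v \;=\; (u \otimes \id_C) \cp (\id_B \otimes v), \]
which holds by bifunctoriality since $(u \cp \id_B) \otimes (\id_C \cp v) = u \otimes v$. Writing $u = s_1 \cp \ldots \cp s_m$ and $v = r_1 \cp \ldots \cp r_k$ with each $s_i, r_j$ a slice, and using that $- \otimes \id_C$ and $\id_B \otimes -$ are functors (again bifunctoriality) and hence preserve composition, I obtain
\[ u \otimes \id_C = (s_1 \otimes \id_C) \cp \ldots \cp (s_m \otimes \id_C), \qquad \id_B \otimes v = (\id_B \otimes r_1) \cp \ldots \cp (\id_B \otimes r_k). \]
Each factor $s_i \otimes \id_C$ is itself a slice, since whiskering the slice $\id \otimes \epsilon \otimes \id$ by an identity on the right merely enlarges its right-hand flanking identity (symmetrically for $\id_B \otimes r_j$). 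Concatenating the two resulting sequences yields the required normal form for $u \otimes v$, closing the induction.

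The main obstacle I anticipate is purely bookkeeping in this last step: enlarging a flanking identity silently fixes a bracketing convention on objects. In the strict setting in which the proposition is actually applied—the slices of $\CatD$—this enlargement is a literal equality and there is nothing further to check. In a genuinely non-strict presentation the re-bracketing differs by associators, so I would either state and prove the result for a fixed (say right-nested) bracketing, noting that the flanking identities then compose associatively on the nose, or observe that any associator that arises is itself among the generators $\Sigma$ and hence simply appears as one of the $\epsilon_i$. Either way the only content-bearing ingredient is the bifunctoriality of $\otimes$, and the remainder is routine structural induction.
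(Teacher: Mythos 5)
Your proof is correct and takes essentially the same route as the paper's: structural induction on terms, with identities and generators as base cases, concatenation for composition, and the interchange decomposition $u \otimes v = (u \otimes \id) \cp (\id \otimes v)$ for the tensor case. You in fact spell out a step the paper leaves implicit—that whiskering an entire composite of slices by an identity distributes across the composition and turns each slice into a slice—and your closing remark on bracketing in the genuinely non-strict setting is a sensible extra precaution that the paper does not address.
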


This factorization can be diagrammed as follows:

\[ \scalebox{0.9}{\tikzfig{sequential-normal-form}} \]

Note that in general $X_i \neq X_{i+1}$ and so on- i.e., the generators need not be ``aligned'' in this factorization.
For example, we can have morphisms like the following:

\begin{example}
  \[ \scalebox{0.9}{\tikzfig{sequential-normal-form-example}} \]
\end{example}

\begin{proof}
  We proceed by induction on terms.
  Let $S_0$ denote the set of terms consisting of identities and generators,
  Then let
  \[ S_n = S_0 \cup \{ t \cp u | t, u \in S_{n-1} \} \cup \{ t \otimes u | t, u \in S_{n-1} \} \]

  It is clear that terms in $S_0$ are already in sequential normal form, so it remains to prove the inductive case,
  beginning with composition.
  Let $v$ be a term in $S_{n+1}$.
  Now by inductive hypothesis, any term in $w \in S_n$ has an equivalent term in sequential normal form, which we'll denote
  $\hat{w}$.
  Now there are three cases:
  \begin{enumerate}
    \item If $v \in S_n$, then we have $\hat{v}$ by inductive hypothesis.
    \item If $v = t \cp u$, then $\hat{t}$ and $\hat{u}$ exist by inductive hypothesis, and we can form $\hat{v} = \hat{t} \cp \hat{u}$.
    \item If $v = t \otimes u$, then $\hat{v} = (\hat{t} \otimes \id) \cp (\id \otimes \hat{u})$
  \end{enumerate}
  and the proof is complete.
\end{proof}

\section{\texorpdfstring{$\FunG$}{N} is well-defined}
\label{section:g-well-defined}
In this appendix we verify that $\FunG$ is well-defined.
This amounts to two things: first that $\FunG$ is well-defined with respect to the interchange law,
and second that it respects the equations of Definition \ref{definition:catd}.

In the former case, sequential normal forms are only unique up to interchange,
so it must be verified that $\FunG$ preserves this property.
This is a straightforward if tedious exercise, which can be done by verifying
each of the cases in Definition \ref{definition:fung}.
Essentially, the only axioms required are naturality and equations
\cite[2.12, 2.13]{tensor_categories}.

Finally, we need to verify the equations of \ref{definition:catd}.
Specifically, for each of the monoidal, adapter, and associator/unitor equations
$\mathtt{lhs} = \mathtt{rhs}$, we show that $\FunG(\mathtt{lhs}) =
\FunG(\mathtt{rhs})$. We give derivations for these below.
Using these derivations one can also tediously check that the equations hold for
cases
$\id \TensorD \mathtt{lhs} \TensorD \id = \id \TensorD \mathtt{rhs} \TensorD \id$,
and so $\FunG$ is equal under any rewrite involving those equations; the only
cases of interest are for associator and unitor equations, which require the use
of the pentagon and triangle axioms, respectively.

We begin with the functor equations \eqref{equation:catd-functor-equations}
\begin{equation*}
  \begin{aligned}
  \FunG(\MorD{\id_A}) = \id_A = \FunG(\id_{\ObjD{A}})
  \end{aligned}
  \qquad
  \qquad
  \begin{aligned}
  \FunG(\MorD{f} \cp \MorD{g}) = \FunG(\MorD{f}) \cp \FunG(\MorD{g}) = f \cp g = \FunG(\MorD{f \cp g})
  \end{aligned}
\end{equation*}
Now the adapter equations \eqref{equation:catd-adapter-equations}:
\begin{equation*}
\begin{aligned}
  \FunG(\bigadapter \cp (\MorD{f} \TensorD \MorD{g}) \cp \bigadapterInv)
    & = \FunG(\bigadapter) \cp \FunG(\MorD{f} \TensorD \MorD{g}) \cp \FunG(\bigadapterInv) \\
    & = \FunG(\MorD{f} \TensorD \MorD{g}) \\
    & = \FunG(\MorD{f} \TensorD \id) \cp \FunG(\id \TensorD \MorD{g}) \\
    & = (f \TensorC \id) \cp (\id \TensorC g) \\
    & = f \TensorC g \\
    & = \FunG(\MorD{f \TensorC g}) \\
\end{aligned}
\qquad
\begin{aligned}
  \FunG(\adapter \cp \adapterInv)
    & = \FunG(\adapter) \cp \FunG(\adapterInv) \\
    & = \id_{\unitC} \cp \id_{\unitC} \\
    & = \id_{\unitC} \\
    & = \FunG(\id_{\unitD}) \\
\end{aligned}
\end{equation*}
\begin{equation*}
\begin{aligned}
  \FunG(\bigadapterInv \cp \MorD{f \TensorC g} \cp \bigadapterInv)
    & = \FunG(\bigadapterInv) \cp \FunG(\MorD{f \TensorC g}) \cp \FunG(\bigadapter) \\
    & = \FunG(\MorD{f \TensorC g}) \\
    & = f \TensorC g \\
    & = (f \TensorC \id) \cp (\id \TensorC g) \\
    & = \FunG(\MorD{f} \TensorD \id) \cp \FunG(\id \TensorD \MorD{g}) \\
    & = \FunG((\MorD{f} \TensorD \id) \cp (\id \TensorD \MorD{g})) \\
    & = \FunG(\MorD{f} \TensorD \MorD{g}) \\
\end{aligned}
\qquad
\begin{aligned}
  \FunG(\adapterInv \cp \adapter)
    & = \FunG(\adapterInv) \cp \FunG(\adapter) \\
    & = \id_{\unitC} \cp \id_{\unitC} \\
    & = \id_{\unitC} \\
    & = \FunG(\MorD{\id_{\unitC}}) \\
    & = \FunG(\id_{\ObjD{\unitC}}) \\
\end{aligned}
\end{equation*}

Finally the associator/unitor equations \eqref{equation:catd-monoidal-equations}:

\begin{align*}
  \FunG(\bigadapterInv \cp (\id \TensorD \bigadapterInv) & \cp (\bigadapter \TensorD \id) \cp \bigadapter) \\
    & = \FunG(\bigadapterInv) \cp \FunG(\id \TensorD \bigadapterInv) \cp \FunG(\bigadapter \TensorD \id) \cp \FunG(\bigadapter) \\
    & = \id \cp \id \cp \assoc \cp \id \\
    & = \assoc \\
    & = \FunG(\MorD{\assoc}) \\
\end{align*}
\begin{align*}
  \FunG(\bigadapterInv \cp (\bigadapterInv \TensorD \id) & \cp (\id \TensorD \bigadapter) \cp \bigadapter) \\
    & = \FunG(\bigadapterInv) \cp \FunG(\bigadapterInv \TensorD \id) \cp \FunG(\id \TensorD \bigadapter) \cp \FunG(\bigadapter) \\
    & = \id \cp \assocInv \cp \id \cp \id \\
    & = \assocInv \\
    & = \FunG(\MorD{\assocInv}) \\
\end{align*}
\begin{align*}
  \FunG(\bigadapterInv \cp (\adapterInv \TensorD \id))
    & = \FunG(\bigadapterInv) \cp \FunG(\adapterInv \TensorD \id) \\
    & = \id \cp \unitl \\
    & = \unitl \\
    & = \FunG(\MorD{\unitl}) \\
\end{align*}
\begin{align*}
  \FunG((\adapter \TensorD \id) \cp \bigadapter)
    & = \FunG(\adapter \TensorD \id) \cp \FunG(\bigadapter) \\
    & = \unitlInv \cp \id \\
    & = \unitlInv \\
    & = \FunG(\MorD{\unitlInv}) \\
\end{align*}
\begin{align*}
  \FunG(\bigadapterInv \cp (\id \TensorD \adapterInv))
    & = \FunG(\bigadapterInv) \cp \FunG(\id \TensorD \adapterInv) \\
    & = \id \cp \unitr \\
    & = \unitr \\
    & = \FunG(\MorD{\unitr}) \\
\end{align*}
\begin{align*}
  \FunG((\id \cp \adapter) \cp \bigadapter)
    & = \FunG(\id \cp \adapter) \cp \FunG(\bigadapter) \\
    & = \unitrInv \cp \id \\
    & = \unitrInv \\
    & = \FunG(\MorD{\unitrInv}) \\
\end{align*}

Thus $\FunG$ is well-defined with respect to the monoidal equations.

\end{document}